\newtheorem{thm}{Theorem} [section]
\newtheorem{cor}[thm]{Corollary}
\newtheorem{cnj}[thm]{Conjecture}
\newtheorem{lem}[thm]{Lemma}
\theoremstyle{definition}
\theoremstyle{remark}
\numberwithin{equation}{section}
\begin{document}
\title[On the $p$-adic properties of Stirling numbers of the first kind ]
{On the $p$-adic properties of Stirling numbers of the first kind }
\begin{abstract}
Let $n, k$ and $a$ be positive integers. The Stirling numbers of the first
kind, denoted by $s(n,k)$, count the number of permutations of $n$
elements with $k$ disjoint cycles. Let $p$ be a prime. In recent years,
Lengyel, Komatsu and Young, Leonetti and Sanna, Adelberg, Hong and Qiu
made some progress in the study of the $p$-adic valuations
of $s(n,k)$. In this paper, by using Washington's congruence on
the generalized harmonic number and the $n$-th Bernoulli number $B_n$
and the properties of $m$-th Stirling numbers of the first kind
obtained recently by the authors, we arrive at an exact
expression or a lower bound of $v_p(s(ap, k))$ with $a$ and
$k$ being integers such that $1\le a\le p-1$ and $1\le k\le ap$.
This infers that for any regular prime $p\ge 7$ and for arbitrary
integers $a$ and $k$ with $5\le a\le p-1$ and $a-2\le k\le ap-1$,
one has $v_p(H(ap-1,k)) < -\frac{\log{(ap-1)}}{2\log p}$ with
$H(ap-1, k)$ being the $k$-th elementary symmetric function
of $1, \frac{1}{2}, ..., \frac{1}{ap-1}$. This gives a partial
support to a conjecture of Leonetti and Sanna raised in 2017.
We also present results
on $v_p(s(ap^n,ap^n-k))$ from which one can derive that under
certain condition, for any prime $p\ge 5$, any odd number
$k\ge 3$ and any sufficiently large integer $n$, if $(a,p)=1$,
then $v_p(s(ap^{n+1},ap^{n+1}-k))=v_p(s(ap^n,ap^n-k))+2$.
It confirms partially Lengyel's conjecture proposed in 2015.
\end{abstract}
\author[S.F. Hong]{Shaofang Hong}
\address{Mathematical College, Sichuan University, Chengdu 610064, P.R. China}
\email{sfhong@scu.edu.cn; s-f.hong@tom.com; hongsf02@yahoo.com}
\author[M. Qiu]{Min Qiu$^*$}
\address{Mathematical College, Sichuan University, Chengdu 610064, P.R. China}
\address{School of Science, Xihua University, Chengdu 610039, P.R. China}
\email{qiumin126@126.com}
\thanks{$^*$ M. Qiu is the corresponding author. S.F. Hong was supported
partially by National Science Foundation of China Grant \#11771304.}
\keywords{$p$-adic valuation, $p$-adic analysis, Stirling numbers of the first kind,
the $m$-th Stirling numbers of the first kind,
Bernoulli numbers, elementary symmetric function}
\subjclass[2000]{Primary 11B73, 11A07}
\maketitle

\section{Introduction}
Let $n$ and $k$ be positive integers such that $k\le n$.
The \emph{Stirling number of the first kind}, denoted by $s(n,k)$,
counts the number of permutations of $n$ elements with $k$ disjoint cycles.
One can also characterize $s(n,k)$ by
$$ (x)_n=x(x+1)(x+2)\cdots(x+n-1)=\sum_{k=0}^{n}s(n,k)x^k. $$
The \emph{Stirling number of the second kind}
$S(n,k)$ is defined as the number of ways to partition a set of
$n$ elements into exactly $k$ nonempty subsets. Thus
$$ S(n,k)=\frac{1}{k!}\sum_{i=0}^{k}(-1)^i\binom{k}{i}(k-i)^n.$$
There are many authors who are interested in the divisibility
properties of Stirling numbers of the second kind, see, for example,
\cite{[Ad],[HZ1],[TL],[TL2],[Lu],[Mi],[HZ2],[HZ3]}.
But very few seems to be known about the divisibility properties of $s(n,k)$.

The Stirling number of the first kind $s(n, k)$ is closely related to the
$k$-th elementary symmetric function $H(n, k)$ of $1,1/2,...,1/n$
by the following identity

$$
s(n+1,k+1)=n!H(n,k),
$$
(see Lemma 1.1 in \cite{[LS]}), where $H(n,0):= 1$ and
$$
H(n,k) := \sum_{1 \le i_1 < \cdots < i_k \le n} \frac{ 1 } { i_1 \cdots i_k }.
$$
For any positive integer $n$, let $v_p(n)$ represent the
{\it $p$-adic valuation} of $n$, i.e., $v_p(n)$ is the
biggest nonnegative integer $r$ with $p^r$ dividing $n$.
If $x=\frac{n_1}{n_2}$, where $n_1$ and $n_2$ are integers
and $n_2\ne 0$, then we define $v_p(x):=v_p(n_1)-v_p(n_2)$.
The Legendre formula about the $p$-adic valuation
of the factorial tells us that
$$
v_p(n!)=\frac{n-d_p(n)}{p-1},
$$
where $d_p(n)$ stands for the base $p$ digital sum of $n$.
Hence the investigation of $v_p(s(n+1,k+1))$ is equivalent
to that of $v_p(H(n,k))$.

It is known that $H(n,1)=H_n$ is the $n$-th harmonic number.
Theisinger \cite{[T]} and Nagell \cite{[N]} proved that
$H_n$ is an integer only for $n=1$. Erd\H{o}s and Niven
\cite{[EN]} proved that $H(n,k)$ is an integer only for finitely
many positive integers $n$ and $k$, and later Chen and
Tang \cite{[CT]} showed that $H(1,1)$ and $H(3,2)$ are
the only integral values. See also \cite{[FHJY],[HW],[LHQW],[WH], [YLFJ]}
for some further studies on this topic. In 1862, Wolstenholme
\cite{[Wol]} proved that for any prime $p\ge 5$ the numerator of
$H_{p-1}$ is divisible by $p^2$. Eswarathasan and Levine \cite{[Es]}
conjectured that the set of positive integers $n$ such that the numerator of
$H_n$ is divisible by $p$ is finite for any given prime $p$. Boyd \cite{[Bo]}
confirmed this conjecture for all primes $p\le 547$ but 83, 127 and 397.
He also conjectured that $v_p(H_n)\le 3$ holds for any integer $n\ge 1$
and for any odd prime $p\ge 5$.
See \cite{[Ka],[Sa]} for more results on the $p$-adic
properties of $H_n$.

Let $p$ be a prime and $n$ be a positive integer. Lengyel \cite{[L]} showed that
there exists a constant $c'=c'(k,p)> 0$ so that for any $n\ge n_0(k,p)$,
one has $v_p(s(n,k))\ge c'n$. This implies that the $p$-adic valuation of
$H(n,k)$ has a lower bound. Consequently, Leonetti and Sanna \cite{[LS]}
conjectured that there exists a positive constant $c=c(k,p)$ such that
\begin{align}
v_p(H(n,k))<-c\log n\label{1.1}
\end{align}
for all large $n$ and confirmed this conjecture for some special cases.
Moreover, Komatsu and Young \cite{[KY]} used the theory of $p$-adic
Newton polygon to show that if $k$ is a nonnegative integer and $n$ is
of the form $n=kp^r+m$ with $0\le m<p^r$, then $v_p(s(n+1,k+1))=v_p(n!)-v_p(k!)-kr$.
Using the study of the higher order Bernoulli numbers $B_n^{(l)}$,
Adelberg \cite{[Ad]} investigated some $p$-adic properties of Stirling
numbers of both kinds. Qiu and Hong \cite{[QH]} presented a detailed 2-adic
analysis and obtained an exact formula for $v_2(s(2^n, k))$ with $n$ and
$k$ being positive integers such that $1\le k\le 2^n$. In \cite{[QFH]},
Qiu, Feng and Hong provide a 3-adic analysis and arrive at
a formula for $v_3(s(a3^n, k))$ with $k$ being an integer
such that $1\le k\le a3^n$, where $a\in\{1, 2\}$.

In this paper, we are mainly concerned with the $p$-adic
valuations of the Stirling numbers of the first kind. Actually,
we yield an exact expression or a lower bound of
$v_p(s(ap, i))$ with $p\ge 5$ being a prime and $a$ and $i$
being integers such that $1\le a\le p-1$ and $1\le i\le ap$.
Throughout let $p\ge 5$ be a prime and $n$ be a positive
integer. Let $a$ be an integer with $1\le a\le p-1$.
Obviously, we have $v_p(s(ap,1))=v_p((ap-1)!)=a-1$ and
$$
v_p(s(ap,ap-1))=v_p\big(\binom{ap}{2}\big)=1,\ v_p(s(ap,ap))=v_p(1)=0.
$$
For any integer $i$ with $2\le i\le ap-2$, one can always write
$i=ap-k$ for some integer $k$ with $2\le k\le ap-2$.

As usual, for any given real number $y$, $\lfloor y\rfloor$ and $\lceil y\rceil$
stand for the largest integer no more than $y$ and the smallest
integer no less than $y$, respectively. For any integer $k$
and prime $p$, let $\langle k\rangle$ denote the
integer such that $0\le \langle k\rangle\le p-2$
and $k\equiv \langle k\rangle \pmod {p-1}$.
Let $\epsilon_k :=0$ if $k$ is even
and $\epsilon_k :=1$ if $k$ is odd.
The $n$-th Bernoulli number $B_n$ is defined
by the Maclaurin series as
$$
\frac{x}{e^x-1}=\sum_{n=0}^{\infty}B_n\frac{x^n}{n!}.
$$
By the von Staudt-Clausen theorem (see, for instance,
\cite{[IR]}), we know that if $n$ is even, then
$$
B_n+\sum_{(p-1)\mid n}\frac{1}{p}\in \mathbb{Z},
$$
where the sum is over all primes $p$ such that
$(p-1)\mid n$. The von Staudt-Clausen theorem tells us
that $v_p(B_{l})\ge 0$ for all even integers $l$
with $2\le l\le p-3$. In particular, if $v_p(B_{l})=0$
for all even integers $l$ with $2\le l\le p-3$, then
$p$ is called a {\it regular prime}. Note that 3 is
a regular prime. If $p$ is not regular it is called
{\it irregular}. The first irregular primes are 37
and 59. We should emphasize that the cardinality
and density of regular primes are largely unknown.
Although one can argue heuristically that asymptotically
more than half of primes should be regular, it is not even
known that there are infinitely many regular primes,
while infinitely many primes are known to be irregular.
We refer the interested readers to \cite{[IR]} and
\cite{[Was1]} for the history and basic facts on the
regular and irregular primes.
We can now state the first main result of this paper as follows.
\begin{thm}\label{thm1}
Let $p\ge 5$ be a prime and let $a$ and $k$ be integers
such that $1\le a\leq p-1$ and $2\le k \leq ap-2$.
Then each of the following is true:

{\rm (i).} If $k\equiv \epsilon_k\pmod{p-1}$, then
\begin{align*}
 v_p(s(ap, ap-k)) = (v_p(k)+1)\epsilon_k.
\end{align*}

{\rm (ii).} If $2\le k \leq a(p-1)-1$ and $k\not\equiv\epsilon_k\pmod{p-1}$,
then
\begin{align*}
v_p(s(ap, ap-k)) \ge (v_p(k)+1)\epsilon_k+1,
\end{align*}
where the equality holds if and only if
$v_p\big(B_{2\big\lfloor\frac{\langle k\rangle}{2}\big\rfloor}\big)=0$.
In particular, if $p$ is regular, then
$v_p(s(ap,ap-k)) = (v_p(k)+1)\epsilon_k+1.$

{\rm (iii).} If $a\ge 4$ and $a(p-1)+2\le k\leq ap-2$, then
\begin{align*}
v_p(s(ap, ap-k)) \geq a+k-ap.
\end{align*}
\end{thm}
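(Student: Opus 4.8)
Everything rests on the elementary identity $s(ap,ap-k)=e_k(1,2,\dots,ap-1)$, where $e_k$ is the $k$-th elementary symmetric polynomial, obtained by reading off the coefficient of $x^{ap-k}$ in $\prod_{i=0}^{ap-1}(x+i)$; equivalently $s(ap,ap-k)=(ap-1)!\,H(ap-1,ap-1-k)$ by $s(n+1,k+1)=n!H(n,k)$. Since $ap-1=(a-1)p+(p-1)$ in base $p$, Legendre's formula gives $v_p((ap-1)!)=\frac{(ap-1)-(a+p-2)}{p-1}=a-1$, so the task is to pin down $v_p\big(e_k(1,2,\dots,ap-1)\big)=(a-1)+v_p\big(H(ap-1,ap-1-k)\big)$. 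Part (iii) then follows immediately: when $a\ge4$ and $a(p-1)+2\le k\le ap-2$ one has $1\le ap-1-k\le a-3<p$; every $r$ with $1\le r\le ap-1<p^{2}$ satisfies $v_p(r)\le1$, so each summand of $H(ap-1,ap-1-k)$ is a product of $ap-1-k$ factors of valuation $\ge-1$, whence $v_p(H(ap-1,ap-1-k))\ge-(ap-1-k)$ and $v_p(s(ap,ap-k))\ge(a-1)-(ap-1-k)=a+k-ap$.

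\smallskip
For (i) and (ii) I would split off the multiples of $p$,
\[
\prod_{i=1}^{ap-1}(1+ix)=\Big(\prod_{j=1}^{a-1}(1+jpx)\Big)\,Q(x),\qquad
Q(x):=\prod_{\substack{1\le i\le ap-1\\ p\nmid i}}(1+ix),
\]
and treat the two factors separately. The first factor equals $\sum_{m=0}^{a-1}s(a,a-m)\,p^{m}x^{m}$, so its $x^{m}$-coefficient has valuation $\ge m$. The factor $Q(x)$ is, essentially, the generating polynomial of the $m$-th Stirling numbers of the first kind with $m=p$, whose $p$-adic behaviour was established in our companion work; the basic congruence, which I would re-derive, is $Q(x)\equiv(1-x^{p-1})^{a}\pmod p$, a consequence of Wilson's theorem because the integers in $[1,ap-1]$ prime to $p$ run through $a$ complete reduced residue systems modulo $p$ and $\prod_{r=1}^{p-1}(1+rx)\equiv1-x^{p-1}\pmod p$. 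Hence $[x^{k}]Q\equiv0\pmod p$ unless $(p-1)\mid k$, in which case $[x^{k}]Q\equiv(-1)^{k/(p-1)}\binom{a}{k/(p-1)}\pmod p$ is a $p$-adic unit (as $a\le p-1$). This disposes of the even half of (i): if $k$ is even, then $\epsilon_k=0$ and the hypothesis $k\equiv\epsilon_k\pmod{p-1}$ means $(p-1)\mid k$, so the $m=0$ term $[x^{k}]Q$ of the convolution is a unit while every other term has positive valuation, giving $v_p(s(ap,ap-k))=0=(v_p(k)+1)\epsilon_k$.

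\smallskip
In all remaining cases the unit contributions vanish modulo $p$ and one must compute one or two further digits of the convolution. Here I would invoke the structural identities for the $m$-th Stirling numbers of the first kind from our companion work; their proof expands $Q(x)=\prod_{r=1}^{p-1}\prod_{j=0}^{a-1}\big((1+rx)+jpx\big)$ in powers of $p$, which reduces the leading correction to the single polynomial $\prod_{r=1}^{p-1}(1+rx)=\sum_j e_j(1,\dots,p-1)x^j=\sum_j(p-1)!\,H(p-1,p-1-j)\,x^j$, and then evaluates the resulting generalized harmonic numbers $\sum_{r=1}^{p-1}r^{-t}$ by Washington's congruences
\[
\sum_{r=1}^{p-1}\frac{1}{r^{2t}}\equiv\frac{2t}{2t+1}\,pB_{p-1-2t}\pmod{p^{2}},\qquad
\sum_{r=1}^{p-1}\frac{1}{r^{2t+1}}\equiv-\frac{(2t+1)(t+1)}{2t+3}\,p^{2}B_{p-3-2t}\pmod{p^{3}}
\]
(for $2\le2t\le p-3$). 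Already the case $a=1$ gives the clean formula $v_p(s(p,p-j))=1+\epsilon_j+v_p\big(B_{2\lfloor j/2\rfloor}\big)$ for $2\le j\le p-2$, the Bernoulli index being exactly the one produced by Washington's congruences for the exponent $p-1-j$ (namely $j$ when $j$ is even, $j-1$ when $j$ is odd). In general, feeding this single-copy information into the convolution against the $p$-divisible factor --- where the key input is a cancellation among the lowest convolution terms that forces an extra power of $p$ when $p\mid k$ --- yields, in the range $2\le k\le a(p-1)-1$: the exact value $v_p(k)+1=(v_p(k)+1)\epsilon_k$ when $k\equiv1\pmod{p-1}$, which completes (i); and the value $(v_p(k)+1)\epsilon_k+1+v_p\big(B_{2\lfloor\langle k\rangle/2\rfloor}\big)$ when $k\not\equiv\epsilon_k\pmod{p-1}$, which is $\ge(v_p(k)+1)\epsilon_k+1$ with equality precisely when $p\nmid B_{2\lfloor\langle k\rangle/2\rfloor}$ --- hence always for regular $p$ --- and this is (ii).

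\smallskip
The step I expect to be the main obstacle is this last one: carrying the expansion of $Q(x)$ modulo $p^{2}$ (and, for odd $k$, modulo $p^{3}$) through uniformly in $a$, establishing the cancellation among the convolution terms $s(a,a-m)\,p^{m}\,[x^{k-m}]Q$ that accounts for the factor $v_p(k)$, and tracking indices so that the surviving obstruction is exactly $B_{2\lfloor\langle k\rangle/2\rfloor}$. This is precisely where the previously established properties of the $m$-th Stirling numbers of the first kind do the work, and where the hypotheses $p\ge5$, $a\le p-1$ and $k\le a(p-1)-1$ enter.
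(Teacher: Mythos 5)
Your treatment of part (iii) is correct and in fact simpler and more direct than the paper's: the bound $v_p(s(ap,ap-k))=v_p((ap-1)!)+v_p(H(ap-1,ap-k-1))\ge (a-1)-(ap-1-k)$, using only $ap-1<p^2$, replaces the paper's induction on $a$ (which goes through the convolution $s(ap,t)=\sum_i s(p,i)s_p((a-1)p,t-i)$ and Boyd's congruence for the base case $a=4$). Likewise your factorization $\prod_{i=1}^{ap-1}(1+ix)=\big(\sum_m s(a,a-m)p^mx^m\big)Q(x)$ with $Q(x)\equiv(1-x^{p-1})^a\pmod p$ correctly settles the even case of (i): when $(p-1)\mid k$ and $k$ is even the $m=0$ term is a unit and all other convolution terms have positive valuation, so $v_p(s(ap,ap-k))=0$. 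This is a genuinely lighter route to that case than the paper's claim that $s(ap,ap-l(p-1))\equiv\binom{a}{l}s(p,1)^l\pmod{p^2}$, proved by induction on $a$.

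However, for the remaining cases --- odd $k$ in (i), and all of (ii) --- your text is a plan rather than a proof, and the missing step is exactly the substance of the theorem. The term $(v_p(k)+1)\epsilon_k$ does not come from any congruence you have written down: it requires showing that for odd $k$ the two lowest-order contributions cancel to the precise extent governed by $v_p(k)$, i.e.\ that $v_p(s(ap,ap-k))=v_p(s(ap,ap-k+1))+v_p(k)+1$ when the even neighbour has small valuation. In the paper this is Theorem 1.3, whose proof rests on Adamchik's identity $s(n,k)=\frac12\sum_{i=k+1}^n(-1)^{n-i}n^{i-k}\binom{i-1}{i-k}s(n,i)$ for $n+k$ odd, together with a careful estimate of the tail $\Delta$; nothing in your convolution framework plays that role, and the asserted ``cancellation among the lowest convolution terms that forces an extra power of $p$ when $p\mid k$'' is precisely what is not substantiated. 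Similarly, for even $k$ with $k\not\equiv 0\pmod{p-1}$ you assert a mod~$p^2$ expansion of $Q(x)$ uniformly in $a$ leading to the obstruction $B_{2\lfloor\langle k\rangle/2\rfloor}$; the paper needs a double induction establishing $s(ap,ap-k)\equiv a\binom{a-1}{l-1}s(p,1)^{l-1}s(p,p-\langle k\rangle)\pmod{p^2}$, which in turn uses the mod-$p^2$ (not merely mod-$p$) version of the $(p-1)\mid k$ case and a comparison $s_p(ap,\cdot)\equiv s(ap,\cdot)\pmod{p^2}$ that itself depends on the odd-$k$ recursion above. You acknowledge this as the ``main obstacle,'' and as it stands it is a genuine gap, not a detail. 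A smaller point: your ``clean formula'' $v_p(s(p,p-j))=1+\epsilon_j+v_p(B_{2\lfloor j/2\rfloor})$ overstates what Washington's congruences give --- they yield equality only when the Bernoulli number is a $p$-adic unit, and otherwise just a lower bound, which is all the theorem claims (and all the paper proves).
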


Let $a$ and $k$ be integers such that $1\le a\leq p-1$
and $\max\{1,a-1\}\le k\le ap$. From Theorem \ref{thm1},
we can see that if $p$ is regular, then $v_p(s(ap,k))\le 2+v_p(k)\le 3$.
Since $s(ap,k)=(ap-1)!H(ap-1,k-1)$, the following consequence follows immediately.

\begin{cor}\label{cor1.3}
Let $p\ge 5$ be a regular prime and let $a$ and $k$ be integers such
that $1\le a\leq p-1$ and $\max\{1,a-2\}\le k \leq ap-1$.
Then $v_p(H(ap-1,k)) \le 4-a.$
\end{cor}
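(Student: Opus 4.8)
The plan is to rewrite everything in terms of $s(ap,k+1)$ and then quote Theorem \ref{thm1}. Since $s(ap,k+1)=(ap-1)!\,H(ap-1,k)$, we have $v_p(H(ap-1,k))=v_p(s(ap,k+1))-v_p\big((ap-1)!\big)$, so the first step is to compute $v_p\big((ap-1)!\big)$ by the Legendre formula: as $1\le a\le p-1$, the base-$p$ expansion of $ap-1$ is $(a-1)\cdot p+(p-1)$, hence $d_p(ap-1)=(a-1)+(p-1)=a+p-2$ and therefore $v_p\big((ap-1)!\big)=\frac{(ap-1)-(a+p-2)}{p-1}=a-1$. It thus suffices to show that $v_p(s(ap,k+1))\le 3$ for every regular prime $p\ge 5$ and all $a,k$ as in the statement.

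The main work is to establish this upper bound from Theorem \ref{thm1}. Put $i=k+1$; the hypotheses $\max\{1,a-2\}\le k\le ap-1$ give $\max\{2,a-1\}\le i\le ap$, in particular $i\ge 2$. For $i\in\{ap-1,ap\}$ the values $v_p(s(ap,i))\in\{0,1\}$ are recorded just before Theorem \ref{thm1}. For $2\le i\le ap-2$ write $i=ap-k'$ with $2\le k'\le ap-2$. The key point is that the constraints force $k'\le a(p-1)+1$, so $k'$ never falls in the interval $a(p-1)+2\le k'\le ap-2$ of Theorem \ref{thm1}(iii); this is precisely why the corollary requires $k\ge\max\{1,a-2\}$, and it means that only parts (i) and (ii) are ever in play. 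If $k'\equiv\epsilon_{k'}\pmod{p-1}$, part (i) gives $v_p(s(ap,ap-k'))=(v_p(k')+1)\epsilon_{k'}\le v_p(k')+1\le 2$, using $v_p(k')\le 1$ since $k'\le ap-2<p^2$. Otherwise $k'\not\equiv\epsilon_{k'}\pmod{p-1}$; because $p-1$ is even, the two borderline values $k'=a(p-1)$ and $k'=a(p-1)+1$ are in fact congruent to $\epsilon_{k'}$ modulo $p-1$, so necessarily $2\le k'\le a(p-1)-1$ and part (ii) applies. Since $p$ is regular, $v_p\big(B_{2\lfloor\langle k'\rangle/2\rfloor}\big)=0$ — indeed $2\lfloor\langle k'\rangle/2\rfloor$ is either $0$, where $B_0=1$, or an even integer in $[2,p-3]$ — so the equality case of (ii) holds and $v_p(s(ap,ap-k'))=(v_p(k')+1)\epsilon_{k'}+1\le v_p(k')+2\le 3$.

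Combining the three ingredients, $v_p(H(ap-1,k))=v_p(s(ap,k+1))-(a-1)\le 3-(a-1)=4-a$, as desired. I do not anticipate a genuine obstacle, since the substance is entirely contained in Theorem \ref{thm1}; the steps requiring care are the verification that the hypothesis $k\ge\max\{1,a-2\}$ exactly excludes part (iii) of that theorem (where only a lower bound on the valuation is available), the treatment of the two borderline indices $k'\in\{a(p-1),a(p-1)+1\}$ lying between the ranges of parts (ii) and (iii), and the uniform bound $v_p(k')\le 1$, which follows from $k'\le ap-2<p^2$.
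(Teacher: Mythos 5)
Your proposal is correct and follows essentially the paper's own route: the paper likewise deduces the corollary immediately from Theorem \ref{thm1} via the identity $s(ap,k+1)=(ap-1)!\,H(ap-1,k)$, the fact $v_p((ap-1)!)=a-1$, and the observation that for regular $p$ the theorem yields $v_p(s(ap,i))\le 3$ on the stated range. You merely spell out the details the paper leaves implicit (the exclusion of part (iii) by the hypothesis $k\ge\max\{1,a-2\}$, the borderline indices $k'\in\{a(p-1),a(p-1)+1\}$ falling under part (i), and the bound $v_p(k')\le 1$), and these are all handled correctly.
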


\noindent
So for any regular prime $p\ge 7$ and for arbitrary integers $a$
and $k$ with $5\le a\le p-1$ and $a-2\le k\le ap-1$, one has
$v_p(H(ap-1,k)) < -c\log{(ap-1)}$ with $c=\frac{1}{2\log p}$.
This gives a partial support to Conjecture (\ref{1.1})
due to Leonetti and Sanna.

Consequently, we state the second main result of this paper as follows.

\begin{thm}\label{thm2}
Let $p\ge 5$ be a prime and let $a$ and $n$ be positive integers such that
$(a,p)=1$. Let $k$ be an odd integer with $1\le k\le ap^n-1$. Then
\begin{align*} 
&v_p(s(ap^n,ap^n-k))\notag\\
&\left\{\begin{array}{ll}
=v_p(s(ap^n,ap^n-k+1))+v_p(ap^n-k)+n, & {\it if}\ v_p(s(ap^n,ap^n-k+1))\le 2n-1; \\
\ge v_p(ap^n-k)+3n, & {\it if}\  v_p(s(ap^n,ap^n-k+1))\ge 2n.
\end{array}\right.
\end{align*}
\end{thm}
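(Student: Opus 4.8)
The plan is to pass from the Stirling numbers to elementary symmetric functions and then exploit the symmetry $i\leftrightarrow ap^{n}-i$ among the arguments $1,2,\dots,ap^{n}-1$. Put $M:=ap^{n}$. From $(x)_{M}=x\prod_{i=1}^{M-1}(x+i)=\sum_{j}s(M,j)x^{j}$ one reads off $s(M,M-j)=e_{j}(1,2,\dots,M-1)=:e_{j}$ for every $j$, so the assertion becomes a congruence relating $e_{k}$ to $e_{k-1}$ (for odd $k$) modulo a sufficiently high power $p^{cn}$ of $p$. Since $(a,p)=1$ we have $v_{p}(M)=n$, which is the origin of the number $n$ in the statement.

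First I would pair the factors of $\prod_{i=1}^{M-1}(1+ix)$, grouping $i$ with $M-i$ (and, when $M$ is even, carrying along the lone central factor $1+\frac{M}{2}x$, harmless because $v_{p}(\frac{M}{2})=n$), using the elementary identity $(1+ix)(1+(M-i)x)=(1-i^{2}x^{2})\left(1+\frac{Mx}{1-ix}\right)$. Expanding the product of the correction factors $1+\frac{Mx}{1-ix}$ in powers of $M$ gives, modulo $p^{cn}$,
\[ \sum_{k\ge 0}s(M,M-k)\,x^{k}\ \equiv\ Q(x^{2})\left(1+M\sum_{i}\frac{x}{1-ix}+M^{2}(\cdots)\right)\pmod{p^{cn}}, \]
where $Q(y):=\prod_{i}(1-i^{2}y)$, the product running over $1\le i\le\lfloor(M-1)/2\rfloor$; up to sign, $Q$ is the generating function of the symmetric functions of $1^{2},2^{2},\dots$, i.e. of the $m$-th Stirling numbers of the first kind whose $p$-adic behaviour was analysed by the authors.

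Next I would extract the coefficient of $x^{k}$ with $k$ odd. Since $Q(x^{2})$ is an even series, only the correction terms contribute; the $M\sum_{i}\frac{x}{1-ix}$ part re-sums, by the logarithmic-derivative identity $\sum_{i}\prod_{i'\neq i}(1-i'^{2}y)=\frac{M-1}{2}Q(y)-yQ'(y)$ (together with the fact that the coefficient of $y^{m}$ in $yQ'(y)$ equals $m$ times that of $Q(y)$), to $\frac{M-k}{2}$ times the coefficient $A$ of $y^{(k-1)/2}$ in $Q(y)$ — and in the $M$-even case the factor $1+\frac{M}{2}x$ supplies exactly the term needed so that the same $\frac{M-k}{2}$ results. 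Thus
\[ s(M,M-k)\ \equiv\ \frac{M-k}{2}\,M\,A\ +\ M^{2}(\cdots)\pmod{p^{cn}}, \]
while on the even side $s(M,M-k+1)=e_{k-1}\equiv A\pmod{p^{n}}$ with corrections of lower order in $M$, so for the purpose of valuations $A$ is just $s(M,M-k+1)$. Since $v_{p}(M)=n$ and $v_{p}(\frac{M-k}{2})=v_{p}(ap^{n}-k)$, the leading term has valuation $n+v_{p}(ap^{n}-k)+v_{p}(s(M,M-k+1))$; when $v_{p}(s(M,M-k+1))\le 2n-1$ this dominates everything else and gives the stated equality, and when $v_{p}(s(M,M-k+1))\ge 2n$ the leading term already has valuation $\ge v_{p}(ap^{n}-k)+3n$, and so do the remaining terms, giving the lower bound.

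The crux — and where the full strength of the quoted tools is needed — is the precision control in the last step: a priori the $M^{2}(\cdots)$ contribution (and the terms truncated modulo $p^{cn}$), built from $e_{2}$-type combinations of the series $\frac{1}{1-ix}$ and hence from power sums $\sum_{i}i^{t}$ of lower weight, competes with the leading $\frac{M-k}{2}MA$ term once $v_{p}(s(M,M-k+1))$ is as large as $2n-1$. Showing that the leading term nevertheless wins requires the authors' sharp lower bounds for $v_{p}$ of the $m$-th Stirling numbers of the first kind — to understand how the valuations of the coefficients of $Q$ vary and to identify $v_{p}(A)$ with $v_{p}(s(M,M-k+1))$ — together with Washington's congruence expressing the relevant power sums (equivalently, generalized harmonic numbers) through Bernoulli numbers modulo high powers of $p$, which evaluates the correction series precisely enough to exclude accidental cancellation. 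A routine but unavoidable nuisance throughout is the separate bookkeeping for $M$ even versus $M$ odd, which affects only terms of lower order in $M$.
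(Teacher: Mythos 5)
Your reduction is, at bottom, the same skeleton as the paper's: the pairing $i\leftrightarrow ap^{n}-i$ is exactly the content of Adamchik's reflection identity (Lemma 2.1 of the paper), whose exact form for odd $k$ is $s(M,M-k)=\tfrac12\sum_{j\ge1}(-1)^{j-1}M^{j}\binom{M-k+j-1}{j}s(M,M-k+j)$ with $M=ap^{n}$, and your first-order computation (the term $\tfrac{M-k}{2}MA$) is a correct reproduction of its $j=1$ term up to the replacement of $s(M,M-k+1)$ by $A=[y^{(k-1)/2}]Q(y)$. The problem is that everything the theorem actually asserts lives in the part you defer. First, you only establish $A\equiv e_{k-1}=s(M,M-k+1)\pmod{p^{n}}$, so the substitution ``for the purpose of valuations $A$ is just $s(M,M-k+1)$'' fails exactly in the critical range $n\le v_{p}(s(M,M-k+1))\le 2n-1$, where the equality case must still hold: the discrepancy $\tfrac{M-k}{2}M\,(e_{k-1}-A)$ is only guaranteed valuation $\ge v_{p}(ap^{n}-k)+2n$, which can be smaller than the claimed main-term valuation $v_{p}(ap^{n}-k)+n+v_{p}(s(M,M-k+1))$; you would need agreement modulo about $p^{2n}$, which your expansion does not give. (The paper avoids this entirely because its $j=1$ term is literally $\tfrac12 M(M-k)s(M,M-k+1)$.) Second, the bound $v_{p}(M^{2}(\cdots))\ge v_{p}(ap^{n}-k)+3n$ is the heart of the proof and is not a routine tail estimate: $v_{p}(ap^{n}-k)$ is unbounded (there is no upper bound on $a$), so each higher correction must be shown divisible by $p^{v_{p}(ap^{n}-k)}$ in addition to $p^{(j+1)n}$-type factors. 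In the exact identity this divisibility is visible in the binomial coefficients $\binom{M-k+j-1}{j}$, whose numerators contain $M-k$, together with a bootstrap ($v_{p}(s(M,M-k+2))\ge n$ because $k-2$ is odd) for $j=2$ and a separate case analysis for $j\ge4$ when $v_{p}(ap^{n}-k)>n$; in your order-by-order expansion in powers of $M$ this structure is invisible and nothing in the sketch recovers it.

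Moreover, the tools you invoke to close these gaps cannot do so. Washington's congruence is a fixed-modulus statement ($p^{2}$ or $p^{3}$) about $H_{p-1}^{(r)}$, i.e. power sums over the single block $1,\dots,p-1$; it gives no control of the symmetric functions or power sums of $1,\dots,\lfloor(ap^{n}-1)/2\rfloor$ to modulus $p^{3n}$, and in the paper it is used only for the $n=1$ case (Theorem 1.1), not for this theorem. And the ``sharp lower bounds for the $m$-th Stirling numbers'' you appeal to (the paper's Lemma 2.11 on $v_{p}(s_{p^{n}}(ap^{n},\cdot))$) are themselves deduced from Theorem 1.3, so using them here would be circular; the only $m$-th Stirling facts available independently are the identities of Lemmas 2.3 and 2.4. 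So the proposal correctly sets up the reflection mechanism but leaves the decisive $p$-adic estimates unproved, and the route proposed for supplying them would not work as stated.
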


On the other hand, Lengyel \cite{[L]} proved the following interesting result.

\begin{thm}\label{thm1.4'} \cite{[L]}
For any prime $p$, any integer $a\ge 1$ with $(a,p)=1$,
and any even $k\ge 2$ with the condition
\begin{align*}
\exists \ n_1\in {\mathbb Z}^+: n_1> 3\log_p{k}+\log_p{a}\ \ \
{\it such\ that}\  \ \ v_p(s(ap^{n_1},ap^{n_1}-k))< n_1
\end{align*}
or $k=1$ with $n_1=1$, then for $n\ge n_1$ one has
\begin{align*}
v_p(s(ap^{n+1},ap^{n+1}-k))=v_p(s(ap^n,ap^n-k))+1.
\end{align*}
\end{thm}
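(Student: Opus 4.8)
The plan is to reduce Theorem~\ref{thm1.4'} to a single ``splitting recursion'' for $s(ap^{n},ap^{n}-k)$ and to control its ingredients $p$-adically by means of Washington's congruence. Write $N=ap^{n+1}$ and recall the identity $s(m,m-k)=e_{k}(1,2,\dots,m-1)$, the $k$-th elementary symmetric function, which follows at once from $(x)_{m}=\prod_{j=0}^{m-1}(x+j)$. I split $\{1,\dots,N-1\}$ into the multiples of $p$, namely $p\cdot\{1,2,\dots,ap^{n}-1\}$, and the set $\mathcal{U}:=\{\,1\le l\le N-1:\ p\nmid l\,\}$ of integers coprime to $p$, and factor $\prod_{l=1}^{N-1}(1+lx)$ accordingly; comparing coefficients of $x^{k}$ yields
\begin{align*}
s(ap^{n+1},ap^{n+1}-k)=\sum_{i=0}^{k}p^{\,i}\,s(ap^{n},ap^{n}-i)\,T_{k-i},
\end{align*}
where $T_{j}:=e_{j}(\mathcal{U})$ is the $j$-th elementary symmetric function of the integers in $[1,ap^{n+1}-1]$ coprime to $p$ --- up to indexing these are precisely the $p$-th Stirling numbers of the first kind whose properties the authors have already obtained.

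The heart of the matter is pinning down the valuations on the right-hand side. Applying $\log$ to $g(x):=\prod_{l\in\mathcal{U}}(1+lx)$ and expanding shows that $T_{j}$ is a universal polynomial (via Newton's identities) in the power sums $\Sigma_{i}:=\sum_{l\in\mathcal{U}}l^{\,i}$; Washington's congruence together with the von Staudt--Clausen theorem gives, for $n$ past the threshold $3\log_{p}k+\log_{p}a$, that $v_{p}(\Sigma_{i})=(n+1)+v_{p}(B_{i})-v_{p}(i+1)$ for even $i\ge2$ and $v_{p}(\Sigma_{i})\ge 2n+1$ for odd $i$ (the latter bound also follows from the symmetry $l\leftrightarrow ap^{n+1}-l$ of $\mathcal{U}$). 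Feeding this into Newton's identities one obtains: $v_{p}(T_{j})\ge n+1$ for every $j\ge1$; the sharp value $v_{p}(T_{k})=(n+1)+\beta_{k}$, where $\beta_{k}:=v_{p}(B_{k})-v_{p}(k+1)-v_{p}(k)$ depends only on $p$ and $k$; and $v_{p}(T_{j})\ge 2n+1$ for odd $j$. Running the identical computation one level down --- now for $\prod_{l=1}^{ap^{n}-1}(1+lx)$ with the full power sums $\sum_{l=1}^{ap^{n}-1}l^{\,i}$ --- shows that the single-part contribution dominates, so $v_{p}\big(s(ap^{n},ap^{n}-i)\big)=n+\beta_{i}$ for even $i\ge2$, while $v_{p}\big(s(ap^{n},ap^{n}-i)\big)\gtrsim 2n$ for odd $i$ (the latter also falls out of Theorem~\ref{thm2}). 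Crucially, the Bernoulli ``correction'' $\beta_{k}$ attached to $T_{k}$ and to $s(ap^{n},ap^{n}-k)$ is the same, so in the regime $v_{p}\big(s(ap^{n},ap^{n}-k)\big)<n$ one gets the exact equality $v_{p}(T_{k})=v_{p}\big(s(ap^{n},ap^{n}-k)\big)+1$.

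The conclusion then follows by a term-by-term comparison in the recursion. The $i=0$ term is $T_{k}$, of valuation $v_{p}\big(s(ap^{n},ap^{n}-k)\big)+1$. The $i=k$ term, $p^{k}s(ap^{n},ap^{n}-k)$, has valuation $k+v_{p}\big(s(ap^{n},ap^{n}-k)\big)$, which is larger because $k\ge2$. Each intermediate term $p^{i}s(ap^{n},ap^{n}-i)T_{k-i}$ with $0<i<k$ has valuation at least $i+v_{p}\big(s(ap^{n},ap^{n}-i)\big)+v_{p}(T_{k-i})$, which is $\gtrsim 2n$ when $i$ is even and $\gtrsim 3n$ when $i$ is odd; using $v_{p}\big(s(ap^{n},ap^{n}-k)\big)<n$ and $n>3\log_{p}k+\log_{p}a$, all of these strictly exceed $v_{p}\big(s(ap^{n},ap^{n}-k)\big)+1$. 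Hence the $i=0$ term is strictly dominant, giving $v_{p}\big(s(ap^{n+1},ap^{n+1}-k)\big)=v_{p}\big(s(ap^{n},ap^{n}-k)\big)+1$; as this common value is still $<n+1$, the hypothesis propagates and induction on $n$ finishes the proof for all $n\ge n_{1}$ (the case $k=1$, $n_{1}=1$ being the direct computation $v_{p}\big(\binom{ap^{n+1}}{2}\big)=n+1$).

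The main obstacle is the second step: making the valuation estimates for $T_{j}$ and for $s(ap^{n},ap^{n}-i)$ sharp enough --- in particular matching the Bernoulli correction $\beta_{k}$ on the two levels so that $v_{p}(T_{k})=v_{p}\big(s(ap^{n},ap^{n}-k)\big)+1$ holds exactly --- and quantifying the threshold ($n_{1}>3\log_{p}k+\log_{p}a$) precisely enough that it dominates every $B$-number contribution and renders the intermediate terms of the recursion genuinely negligible; the odd-index terms, where the valuation jumps by an extra $\sim n$, must be treated separately and are where Theorem~\ref{thm2} (or the symmetry of $\mathcal{U}$) is invoked.
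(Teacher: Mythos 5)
First, a point of order: the paper does not prove Theorem~\ref{thm1.4'} at all --- it is quoted verbatim from Lengyel \cite{[L]} and used as a black box in the proof of Theorem~\ref{thm1.5}. So there is no in-paper proof to compare against; your proposal has to stand on its own. Its architecture is sensible and close in spirit to Lengyel's: the splitting of $\{1,\dots,ap^{n+1}-1\}$ into $p\cdot\{1,\dots,ap^{n}-1\}$ and the prime-to-$p$ part, the resulting convolution $s(ap^{n+1},ap^{n+1}-k)=\sum_{i}p^{i}s(ap^{n},ap^{n}-i)T_{k-i}$, the $k=1$ base case, and the induction that propagates the hypothesis $v_p(s(ap^{n},ap^{n}-k))<n$ are all correct and are the right skeleton.

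However, the quantitative core is asserted rather than proved, and one of the assertions is false as stated. (1) The claim $v_{p}\big(s(ap^{n},ap^{n}-i)\big)=n+\beta_{i}$ for \emph{every} even $i\ge 2$ cannot be right unconditionally: the very hypothesis of the theorem ($v_p(s(ap^{n_1},ap^{n_1}-k))<n_1$) exists because extra cancellation can and does occur (compare Theorem~\ref{thm1}(ii), where equality holds only when a Bernoulli number is a $p$-adic unit). You should use the given hypothesis for $i=k$ and settle for lower bounds at the intermediate indices; re-deriving an unconditional exact formula is both unnecessary and incorrect. (2) The ``crucial'' matching $v_{p}(T_{k})=v_{p}\big(s(ap^{n},ap^{n}-k)\big)+1$ is exactly the content of the theorem and is never actually established: $T_{k}$ is a symmetric function of the prime-to-$p$ integers below $ap^{n+1}$ while $s(ap^{n},ap^{n}-k)$ is one of \emph{all} integers below $ap^{n}$, so matching their leading Bernoulli terms requires comparing $\sum_{l<ap^{n+1},\,p\nmid l}l^{i}=\sum_{l<ap^{n+1}}l^{i}-p^{i}\sum_{l<ap^{n}}l^{i}$ with $\sum_{l<ap^{n}}l^{i}$ via Faulhaber's formula and controlling all error terms; this is doable but is precisely the work you have deferred. (Your stated valuation $v_{p}(\Sigma_{i})=(n+1)+v_{p}(B_{i})-v_{p}(i+1)$ is also dubious: the $p$-adically dominant term of Faulhaber's formula is $B_{i}M$ with no $1/(i+1)$, and the case $(p-1)\mid i$, where $v_{p}(B_{i})=-1$ by von Staudt--Clausen, needs separate care; Washington's congruence, which concerns $H_{p-1}^{(r)}$, is not the relevant tool here.) (3) The threshold $n_{1}>3\log_{p}k+\log_{p}a$ is never actually used to dominate anything; the inequalities showing that every cross term $p^{i}s(ap^{n},ap^{n}-i)T_{k-i}$ with $0<i<k$ exceeds $v_{p}(T_{k})$ depend on bounding $v_{p}(B_{j})\ge -1$, $v_{p}(j)+v_{p}(j+1)\le\log_{p}(j+1)$ and the $1/j$ denominators in Newton's identities, and this is where the specific constant $3$ must be earned. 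Until these three items are carried out, the proposal is an outline of a proof, not a proof.
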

\noindent
Meanwhile, for any odd
$k\ge 3$, Lengyel conjectured in \cite{[L]} that for any
integer $n\ge n_1(p,k)$ with some sufficiently large $n_1(p,k)$,
one has
\begin{align}\label{1.7}
v_p(s(ap^{n+1},ap^{n+1}-k))=v_p(s(ap^n,ap^n-k))+2.
\end{align}

Now by Theorems \ref{thm2} and \ref{thm1.4'},
one yields the following analogous result which is the third
main result and proves partially Lengyel's Conjecture (\ref{1.7}).

\begin{thm}\label{thm1.5}
Let $p\ge 5$ be a prime and let $a$ be a positive integer such that
$(a,p)=1$. Let $k\ge 3$ be an odd integer with the condition
\begin{align*}
\exists \ n_1\in {\mathbb Z}^+: n_1> 3\log_p{(k-1)}+\log_p{a}\ \ \
{\it such\ that} \  \ \ v_p(s(ap^{n_1},ap^{n_1}-(k-1)))< n_1.
\end{align*}
Then for any positive integer $n$ with $n\ge n_1$ one has
$$v_p(s(ap^{n+1},ap^{n+1}-k))=v_p(s(ap^n,ap^n-k))+2.$$
Furthermore, we have
$$v_p(s(ap^{n},ap^{n}-k))=v_p(s(ap^{n_1},ap^{n_1}-k))+2(n-n_1).$$
\end{thm}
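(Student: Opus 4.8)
The plan is to deduce Theorem \ref{thm1.5} from Theorems \ref{thm2} and \ref{thm1.4'} by a bootstrapping argument on $n$. The key observation is that $k-1$ is an even integer $\ge 2$, so Lengyel's Theorem \ref{thm1.4'} applies to the pair $(a,p)$ with this even value $k-1$ once we verify its hypothesis. But that hypothesis is precisely the assumption imposed in Theorem \ref{thm1.5}: there exists $n_1\in\mathbb{Z}^+$ with $n_1>3\log_p(k-1)+\log_p a$ such that $v_p(s(ap^{n_1},ap^{n_1}-(k-1)))<n_1$. Hence, for all $n\ge n_1$,
\begin{align}
v_p(s(ap^{n+1},ap^{n+1}-(k-1)))=v_p(s(ap^n,ap^n-(k-1)))+1,\label{planeq1}
\end{align}
and by iterating \eqref{planeq1} downward from $n$ to $n_1$ we get $v_p(s(ap^n,ap^n-(k-1)))=v_p(s(ap^{n_1},ap^{n_1}-(k-1)))+(n-n_1)<n_1+(n-n_1)=n$ for every $n\ge n_1$; in particular $v_p(s(ap^n,ap^n-(k-1)))\le n-1\le 2n-1$ for all $n\ge n_1$.

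Next I would feed this into Theorem \ref{thm2}. Since $k$ is odd with $1\le k\le ap^n-1$ (the upper bound holding automatically once $n\ge n_1$ is large, as $ap^n$ grows), Theorem \ref{thm2} gives a dichotomy governed by the size of $v_p(s(ap^n,ap^n-k+1))$ relative to $2n$. By the previous paragraph this quantity is $\le 2n-1$, so we are always in the first branch:
\begin{align}
v_p(s(ap^n,ap^n-k))=v_p(s(ap^n,ap^n-k+1))+v_p(ap^n-k)+n.\label{planeq2}
\end{align}
Now one writes \eqref{planeq2} at level $n+1$ and subtracts \eqref{planeq2} at level $n$. The term $v_p(s(ap^{n+1},ap^{n+1}-k+1))-v_p(s(ap^n,ap^n-k+1))$ equals $1$ by \eqref{planeq1} (note $ap^{n+1}-k+1=ap^{n+1}-(k-1)$). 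For the middle term one checks that $v_p(ap^{n+1}-k)-v_p(ap^n-k)=1$: since $(a,p)=1$ and $k\ge 3$ is not divisible by $p$ for large $n$ in the relevant range — more precisely, $v_p(ap^m-k)=v_p(k)$ is constant once $p^m>k$, wait — I must be careful here. Actually $v_p(ap^m-k)$: if $v_p(k)=t$ then for $p^m>p^t$ one has $v_p(ap^m-k)=t$, a constant in $m$, so the difference is $0$, not $1$. Let me reconsider: then \eqref{planeq2} at levels $n+1$ and $n$ differ by $1+0+1=2$, where the final $1$ comes from the explicit $+n$ versus $+(n+1)$. This yields $v_p(s(ap^{n+1},ap^{n+1}-k))=v_p(s(ap^n,ap^n-k))+2$, as claimed. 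The ``Furthermore'' clause then follows by a trivial induction on $n$ starting from $n_1$, telescoping the increment $2$.

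The main obstacle I anticipate is bookkeeping around the hypotheses rather than any deep difficulty: one must confirm that the inequality $n_1>3\log_p(k-1)+\log_p a$ is exactly what Theorem \ref{thm1.4'} requires for the even integer $k-1$ (it is, upon substituting $k\mapsto k-1$ there), and one must ensure that all the invocations of Theorem \ref{thm2} are legitimate, i.e.\ that $1\le k\le ap^n-1$ holds for every $n\ge n_1$ — which is immediate since $ap^{n_1}\ge ap\ge 5>k$ could fail for large $k$, so the honest statement is that $ap^n-1\ge ap^{n_1}-1$ grows without bound and the condition $n_1>3\log_p(k-1)$ already forces $p^{n_1}>(k-1)^3\ge k-1$, giving $ap^n-1\ge ap^{n_1}-1>k-1$, hence $ap^n-k\ge 1$ for all $n\ge n_1$. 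The only genuinely delicate point is tracking $v_p(ap^n-k)$: one must verify it stabilizes to the constant $v_p(k)$ for all $n\ge n_1$ (using $p^{n_1}>k-1\ge k-p^{v_p(k)}$ when $v_p(k)\ge 1$, and trivially when $p\nmid k$), so that its contribution to the difference of consecutive levels is $0$ and the net increment is exactly $2$.
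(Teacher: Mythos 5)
Your proposal is correct and takes essentially the same route as the paper: apply Lengyel's Theorem \ref{thm1.4'} to the even integer $k-1$ to conclude $v_p(s(ap^n,ap^n-(k-1)))<n$ for all $n\ge n_1$, then invoke the first branch of Theorem \ref{thm2} at levels $n$ and $n+1$ and use the stabilization $v_p(ap^m-k)=v_p(k)$ (valid because the hypothesis forces $k<p^{n_1}$, hence $v_p(k)<n_1\le n$) to get the increment $2$, with the ``furthermore'' clause following by telescoping. Your mid-proof self-correction, that $v_p(ap^{n+1}-k)-v_p(ap^n-k)=0$ so the net increment is $1+0+1=2$, lands on exactly the computation the paper performs.
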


The paper is organized as follows. We reveal some useful
properties of Stirling numbers of the first kind and
generalized harmonic numbers in the next section.
Then we prove Theorems  \ref{thm2} and \ref{thm1.5}
and Theorem \ref{thm1} in Sections 3 and 4, respectively.
Finally, for any positive integer $n$ and $k$ such that
$2\le k\le ap^n-2$, we propose three conjectures on
the $p$-adic valuation of $s(ap^n,k)$.

\section{Preparatory lemmas on Stirling numbers of the first kind
and generalized harmonic numbers}

Let $n$ and $k$ be positive integers. Some basic identities involving
Stirling numbers of the first kind can be listed as follows (see \cite{[LC]}):
$$s(n+1,k)=ns(n,k)+s(n,k-1),\ s(n,0)=0,\ s(n,k)=0 \ {\rm if}\ k\ge n+1,$$
$$
\sum_{k=1}^ns(n,k)=n!,\ s(n,1)=(n-1)!,\ s(n,n)=1,\ s(n,n-1)=\binom{n}{2},
$$
$$s(n,2)=(n-1)!\sum_{k=1}^{n-1}\frac{1}{k} \
{\rm and}\ s(n,n-2)=\frac{1}{4}(3n-1)\binom{n}{3}\ {\rm if}\ n\ge 2.$$

The following two results are known.

\begin{lem}\label{lem1} \cite{[VA]}
Let $n$ and $k$ be positive integers. If $n+k$ is odd, then
$$ s(n,k)=\frac{1}{2}\sum_{i=k+1}^{n}(-1)^{n-i}n^{i-k}\binom{i-1}{i-k}s(n,i).$$
\end{lem}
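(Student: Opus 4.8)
The plan is to first prove the parity-free identity
$$\sum_{i=k}^{n}(-1)^{n-i}n^{i-k}\binom{i-1}{i-k}s(n,i)=s(n,k)\qquad(1\le k\le n),$$
and then to deduce Lemma~\ref{lem1} from it by isolating the $i=k$ summand of the left-hand side. That summand is $(-1)^{n-k}n^{0}\binom{k-1}{0}s(n,k)=(-1)^{n-k}s(n,k)$, which equals $-s(n,k)$ precisely because $n+k$ is odd; transposing it to the left turns the right-hand side into $2s(n,k)$, and dividing by $2$ yields exactly the asserted formula. (If $n+k$ were even the same bookkeeping would instead give $\sum_{i=k+1}^{n}(-1)^{n-i}n^{i-k}\binom{i-1}{i-k}s(n,i)=0$, which explains why the odd-parity hypothesis is imposed.)

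To establish the parity-free identity I would start from the generating function $\prod_{j=0}^{n-1}(x+j)=\sum_{k}s(n,k)x^{k}$ in the reduced form $\prod_{j=1}^{n-1}(x+j)=\sum_{i=1}^{n}s(n,i)x^{i-1}$. Write $[x^{k-1}]P(x)$ for the coefficient of $x^{k-1}$ in a polynomial $P(x)$. The key elementary observation is that, by the binomial theorem, $\binom{i-1}{i-k}n^{i-k}=[x^{k-1}](x+n)^{i-1}$, and this coefficient vanishes when $i<k$, so the summation range may be extended down to $i=1$ without changing the value. Hence the left-hand side of the identity equals $[x^{k-1}]\sum_{i=1}^{n}(-1)^{n-i}s(n,i)(x+n)^{i-1}$, and since $(-1)^{n-i}(x+n)^{i-1}=(-1)^{n+1}(-x-n)^{i-1}$, it equals $(-1)^{n+1}[x^{k-1}]\sum_{i=1}^{n}s(n,i)(-x-n)^{i-1}$.

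Finally I would identify this last polynomial: by the reduced generating function, $\sum_{i=1}^{n}s(n,i)(-x-n)^{i-1}=\prod_{j=1}^{n-1}\big((-x-n)+j\big)$, and re-indexing with $l=n-j$ gives $\prod_{j=1}^{n-1}(-x-n+j)=(-1)^{n-1}\prod_{l=1}^{n-1}(x+l)=(-1)^{n-1}\sum_{m}s(n,m)x^{m-1}$. The two powers of $-1$ cancel, so the left-hand side of the parity-free identity is $[x^{k-1}]\sum_{m}s(n,m)x^{m-1}=s(n,k)$, as wanted, and Lemma~\ref{lem1} follows. The only point needing care is the sign bookkeeping in the substitution $x\mapsto -x-n$ together with the shift between the exponents $i$ and $i-1$; everything else is a routine binomial computation. (Equivalently, one could read the left-hand side as $\tfrac{(-1)^{n-k}}{(k-1)!}$ times the $(k-1)$-st derivative of $\prod_{j=1}^{n-1}(x+j)$ at $x=-n$ and Taylor-expand that polynomial about $x=-n$, but the coefficient-extraction phrasing above seems cleanest.)
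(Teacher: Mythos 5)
Your proposal is correct, and every step checks out: the identity $\binom{i-1}{i-k}n^{i-k}=[x^{k-1}](x+n)^{i-1}$, the extension of the summation range down to $i=1$, the sign bookkeeping $(-1)^{n-i}(x+n)^{i-1}=(-1)^{n-1}(-x-n)^{i-1}$, and the reflection $\prod_{j=1}^{n-1}(-x-n+j)=(-1)^{n-1}\prod_{l=1}^{n-1}(x+l)$ all hold, so the parity-free identity $\sum_{i=k}^{n}(-1)^{n-i}n^{i-k}\binom{i-1}{i-k}s(n,i)=s(n,k)$ follows, and isolating the $i=k$ term (which is $-s(n,k)$ exactly when $n+k$ is odd) gives the lemma; I also confirmed it numerically for $(n,k)=(3,2)$ and $(4,1)$. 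There is nothing in the paper to compare against: the authors state this as a quoted result and give no proof, only the citation to Adamchik. Your argument --- exploiting the symmetry of the roots of $(x+1)(x+2)\cdots(x+n-1)$ under $x\mapsto -n-x$ --- is the natural and essentially standard route to this reflection formula, and it has the added value of making transparent both why the factor $\tfrac12$ appears and why the companion identity $\sum_{i=k+1}^{n}(-1)^{n-i}n^{i-k}\binom{i-1}{i-k}s(n,i)=0$ holds when $n+k$ is even. The only presentational caveat is that the reduced generating function $\prod_{j=1}^{n-1}(x+j)=\sum_{i=1}^{n}s(n,i)x^{i-1}$ silently uses $s(n,0)=0$, which is worth saying explicitly; otherwise the proof is complete and self-contained.
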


\begin{lem}\label{lem6'} \cite{[Bo]}
Let $p$ be a prime with $p\ge 5$. Let $n$ and $k$ be
positive integers with $k\le p-1$. Then
\begin{align*}
H_{np}\equiv \frac{1}{p}H_n \pmod{p^2}
\end{align*}
and
\begin{align*}
H_{np+k}\equiv H_{np}+H_k \pmod{p}.
\end{align*}
\end{lem}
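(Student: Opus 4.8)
The plan is to prove both congruences by splitting each harmonic sum according to divisibility by $p$ and invoking the two classical Wolstenholme congruences, valid for $p\ge 5$, namely $\sum_{r=1}^{p-1}\frac1r\equiv 0\pmod{p^2}$ and $\sum_{r=1}^{p-1}\frac1{r^2}\equiv 0\pmod{p}$. For the first congruence I would separate the multiples of $p$ in the range $1\le j\le np$ from the remaining terms. The multiples are exactly $p,2p,\dots,np$, whose reciprocals sum to $\frac1p\sum_{i=1}^n\frac1i=\frac1p H_n$. Thus $H_{np}=\frac1p H_n+T$, where $T:=\sum_{1\le j\le np,\, p\nmid j}\frac1j$, and the entire problem reduces to showing $T\equiv 0\pmod{p^2}$.

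To handle $T$, I would partition the non-multiples into the $n$ consecutive blocks $\{(i-1)p+r:1\le r\le p-1\}$ for $i=1,\dots,n$. Since each denominator is coprime to $p$, its reciprocal is well defined modulo $p^2$, and writing $m=(i-1)p$ (a multiple of $p$) I would expand
$$
\frac1{m+r}=\frac1r\cdot\frac1{1+m/r}\equiv\frac1r-\frac{m}{r^2}\pmod{p^2},
$$
where all higher-order terms drop out because $m^2\equiv 0\pmod{p^2}$. Summing over $r$ gives
$$
\sum_{r=1}^{p-1}\frac1{m+r}\equiv\sum_{r=1}^{p-1}\frac1r-m\sum_{r=1}^{p-1}\frac1{r^2}\pmod{p^2}.
$$
The first sum vanishes modulo $p^2$ by Wolstenholme's theorem, while in the second term $m=(i-1)p$ supplies one factor of $p$ and $\sum_{r=1}^{p-1}\frac1{r^2}\equiv 0\pmod p$ supplies another, so $m\sum_{r=1}^{p-1}\frac1{r^2}\equiv 0\pmod{p^2}$. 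Hence each block contributes $0$ modulo $p^2$, and summing over all $n$ blocks yields $T\equiv 0\pmod{p^2}$, proving the first congruence.

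The second congruence is far more elementary. Writing $H_{np+k}=H_{np}+\sum_{r=1}^k\frac1{np+r}$, I would observe that because $1\le r\le k\le p-1$, every denominator $np+r$ is coprime to $p$ and satisfies $np+r\equiv r\pmod p$, so $\frac1{np+r}\equiv\frac1r\pmod p$. Summing over $r$ gives $\sum_{r=1}^k\frac1{np+r}\equiv\sum_{r=1}^k\frac1r=H_k\pmod p$, whence $H_{np+k}\equiv H_{np}+H_k\pmod p$.

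The main substance of the argument lies entirely in the two Wolstenholme congruences, which are available precisely for $p\ge 5$; everything else is bookkeeping. The only points requiring care, rather than genuine obstacles, are justifying that reciprocals of $p$-coprime integers may be manipulated modulo $p^2$ (equivalently, reading all congruences $p$-adically, or clearing denominators if one prefers an integral formulation) and keeping track of the two separate factors of $p$ that make the cross term $m\sum_{r=1}^{p-1}\frac1{r^2}$ vanish modulo $p^2$.
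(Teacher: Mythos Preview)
Your proof is correct. Note, however, that the paper does not supply its own proof of this lemma: it is quoted directly from Boyd \cite{[Bo]} as a known result, with no argument given. Your argument is the standard elementary derivation---splitting off the multiples of $p$ and then handling each block of $p-1$ non-multiples via the first-order $p$-adic expansion $\frac{1}{(i-1)p+r}\equiv\frac{1}{r}-\frac{(i-1)p}{r^2}\pmod{p^2}$ together with the two Wolstenholme congruences $H_{p-1}\equiv 0\pmod{p^2}$ and $H_{p-1}^{(2)}\equiv 0\pmod{p}$---and it goes through cleanly for $p\ge 5$. The second congruence is, as you say, immediate from $np+r\equiv r\pmod p$. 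There is nothing to correct; you have simply supplied a self-contained proof where the paper chose to cite one.
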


As introduced in \cite{[QH]}, for any given nonnegative integer $m$,
we define the {\it $m$-th Stirling numbers of the first kind}, denoted by
$s_m(n, k)$ with $n$ and $k$ being nonnegative integers, as follows:
\begin{align*}
(x+m)(x+m+1)\cdots(x+m+n-1):= \sum_{k=0}^{n} s_m(n,k)x^k.
\end{align*}
One notices that the $m$-th Stirling number of the first kind
is actually a special case of the generalized Stirling number
of Hsu and Shiue \cite{[HS]}. The following two basic results
on $s_m(n,k)$ will be needed in the proof of Theorem \ref{thm1}.

\begin{lem}\label{lem2}\cite{[QH]}
Let $m, k$ and $n$ be nonnegative integers. Then
$$  s(m+n,k)=\sum_{i=0}^{k}s(m,i) s_m(n,k-i). $$
\end{lem}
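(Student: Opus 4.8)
The plan is to prove Lemma~\ref{lem2} by induction on $n$, keeping $m$ fixed and letting $k$ range over all nonnegative integers. The cleanest way to organize the argument is to translate the claimed identity into a statement about generating polynomials, since both $s(m+n,k)$ and $s_m(n,k-i)$ arise as coefficients of the rising factorials $(x)_{m+n}$ and $(x+m)_n$ respectively; the identity to be proved then becomes the obvious factorization $(x)_{m+n}=(x)_m\cdot(x+m)_n$, and the convolution sum $\sum_{i=0}^k s(m,i)s_m(n,k-i)$ is precisely the coefficient of $x^k$ in the product of the two polynomials $\sum_i s(m,i)x^i$ and $\sum_j s_m(n,j)x^j$. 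So the heart of the matter is the polynomial identity, which I would establish directly rather than by induction.

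First I would recall from the definitions in the excerpt that
\begin{align*}
(x)_{m+n}=x(x+1)\cdots(x+m+n-1)=\sum_{k=0}^{m+n}s(m+n,k)x^k,
\end{align*}
and similarly $(x)_m=\sum_{i=0}^m s(m,i)x^i$ and $(x+m)_n=(x+m)(x+m+1)\cdots(x+m+n-1)=\sum_{j=0}^n s_m(n,j)x^j$. Next I would observe the trivial factorization of rising factorials,
\begin{align*}
(x)_{m+n}=\big(x(x+1)\cdots(x+m-1)\big)\cdot\big((x+m)(x+m+1)\cdots(x+m+n-1)\big)=(x)_m\,(x+m)_n.
\end{align*}
Substituting the three power-series expansions into this factorization and comparing the coefficient of $x^k$ on both sides (using that $s(m,i)=0$ for $i>m$, which is part of the basic identities listed, so the range of the sum may be taken as $0\le i\le k$) yields exactly
\begin{align*}
s(m+n,k)=\sum_{i=0}^{k}s(m,i)\,s_m(n,k-i),
\end{align*}
which is the assertion. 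For nonnegative integers $n,k$ with $k>m+n$ both sides vanish, and similarly the degenerate cases $n=0$ or $m=0$ are immediately consistent with the convention $s_0(n,k)=s(n,k)$ and $s_m(0,k)=\delta_{k,0}$.

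There is essentially no obstacle here: the only point requiring a word of care is bookkeeping about the ranges of the summation indices and the vanishing conventions $s(m,i)=0$ for $i>m$ and $s_m(n,j)=0$ for $j>n$, so that the finite convolution $\sum_{i=0}^k$ genuinely captures the coefficient of $x^k$ in the product of two honest polynomials. If one prefers to avoid generating functions altogether, the same identity follows by a straightforward induction on $n$ using the recurrence $s_m(n+1,k)=s_{m}(n,k-1)+(m+n)s_m(n,k)$ (which itself is read off from $(x+m)_{n+1}=(x+m+n)(x+m)_n$) together with the known recurrence $s(m+n+1,k)=s(m+n,k-1)+(m+n)s(m+n,k)$; but the polynomial-factorization proof is shorter and conceptually transparent, so that is the route I would take.
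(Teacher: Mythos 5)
Your proof is correct: the factorization $(x)_{m+n}=(x)_m\,(x+m)_n$ together with the defining expansions of $s(m,i)$, $s_m(n,j)$ and $s(m+n,k)$ gives the convolution identity at once, and your handling of the index ranges via the vanishing conventions is sound. The paper itself gives no proof, citing \cite{[QH]} instead, but your generating-polynomial argument is exactly the natural one built into the definition of $s_m(n,k)$, so there is nothing of substance to compare or correct.
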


\begin{lem}\label{lem3}\cite{[QH]}
Let $m, k$ and $n$ be nonnegative integers. Then
$$  s_m(n,k)=\sum_{i=k}^{n}s(n,i)\binom{i}{i-k}m^{i-k}.$$
\end{lem}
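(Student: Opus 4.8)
The plan is to prove the identity by a direct comparison of coefficients in two generating polynomials, exploiting the substitution $x \mapsto x+m$. First I would recall that the ordinary Stirling numbers of the first kind are defined by the expansion $(y)_n = y(y+1)\cdots(y+n-1) = \sum_{i=0}^{n} s(n,i) y^i$. Substituting $y = x+m$ into this identity, the left-hand side becomes exactly the polynomial $(x+m)(x+m+1)\cdots(x+m+n-1)$ that defines the $m$-th Stirling numbers $s_m(n,k)$. This yields at once the polynomial identity $\sum_{k=0}^{n} s_m(n,k)\, x^k = \sum_{i=0}^{n} s(n,i)(x+m)^i$, valid as an equality of polynomials in the indeterminate $x$.

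Next I would expand each power $(x+m)^i$ by the binomial theorem as $\sum_{j=0}^{i} \binom{i}{j} x^j m^{i-j}$ and read off the coefficient of $x^k$ on the right-hand side. Only the terms with $j = k$ contribute, and since $\binom{i}{k} = 0$ whenever $i < k$, the sum over $i$ may be started at $i = k$; this produces the coefficient $\sum_{i=k}^{n} s(n,i)\binom{i}{k} m^{i-k}$. Applying the symmetry $\binom{i}{k} = \binom{i}{i-k}$ then rewrites this as $\sum_{i=k}^{n} s(n,i)\binom{i}{i-k} m^{i-k}$, which is the desired form.

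Finally, equating this with $s_m(n,k)$, the coefficient of $x^k$ on the left-hand side, establishes the claimed formula. The whole argument is a formal computation at the level of polynomials, so there is no analytic or combinatorial subtlety to overcome; the only point deserving a word of care is the bookkeeping of the summation indices when extracting the coefficient of $x^k$, where recognizing that $\binom{i}{k}$ vanishes for $i < k$ is precisely what fixes the lower limit of summation at $i = k$. Since $s_m(n,k) = 0$ for $k > n$ as well, the stated range $k \le i \le n$ is exactly the range over which both sides are nontrivial, so no boundary cases require separate treatment.
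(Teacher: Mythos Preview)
Your proof is correct and is the natural generating-function argument: substitute $y=x+m$ into the defining expansion $(y)_n=\sum_i s(n,i)y^i$, expand $(x+m)^i$ by the binomial theorem, and equate coefficients of $x^k$. Note that the paper itself does not prove this lemma at all; it is quoted from \cite{[QH]} without proof, so there is no in-paper argument to compare against, but your derivation is exactly the standard one and would serve as a complete proof.
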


Let $n$ be a positive integer and let $r$ be a nonnegative integer.
Then the  {\it generalized harmonic number}, denoted by $H_{n}^{(r)}$,
is defined by
$$
H_{n}^{(r)}:=\sum_{i=1}^{n}\frac{1}{i^r}.
$$
Washington proved the following congruences
which were later extended by Hong \cite{[H]}.

\begin{lem}\label{lem4} \cite{[Was]}
Let $p$ be an odd prime and let $r$ be an integer.

{\rm (i).} If $r\ge1$ is odd and $p\geq r+4$, then
$$ H_{p-1}^{(r)}\equiv -\frac{r(r+1)}{2(r+2)}B_{p-r-2}p^2 \pmod {p^3}.$$

{\rm (ii).} If $r\geq2$ is even and $p\geq r+3$, then
$$ H_{p-1}^{(r)}\equiv \frac{r}{r+1}B_{p-r-1}p \pmod {p^2}. $$
\end{lem}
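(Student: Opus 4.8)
The plan is to treat the two congruences separately, proving the even case (ii) first and then bootstrapping the odd case (i) from it by means of the symmetry $i\leftrightarrow p-i$. For part (ii), since $\gcd(i,p)=1$ for each $1\le i\le p-1$, Euler's theorem gives $i^{-r}\equiv i^{m}\pmod{p^2}$ with $m:=p(p-1)-r$, whence $H_{p-1}^{(r)}\equiv\sum_{i=1}^{p-1}i^{m}\pmod{p^2}$. I would then apply Faulhaber's formula
\begin{align*}
\sum_{i=1}^{p-1}i^{m}=\frac{1}{m+1}\sum_{k=0}^{m}\binom{m+1}{k}B_k\,p^{m+1-k}
\end{align*}
and isolate the term that survives modulo $p^2$. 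Using $v_p(B_k)\ge -1$ (von Staudt--Clausen) and $m+1\not\equiv 0\pmod p$, every term with $k\le m-2$ has $p$-adic valuation at least $m-k\ge 2$, while the $k=m-1$ term vanishes because $B_{m-1}=0$ ($m$ being even). Hence only the $k=m$ term $B_m\,p$ remains, so $H_{p-1}^{(r)}\equiv B_m\,p\pmod{p^2}$.

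It then remains to rewrite $B_m$ in terms of $B_{p-r-1}$. Both $m$ and $p-r-1$ are even, both are congruent to $-r$ modulo $p-1$, and neither is divisible by $p-1$ (since $2\le r\le p-3$), so Kummer's congruence yields $B_m/m\equiv B_{p-r-1}/(p-r-1)\pmod p$. Reducing $m\equiv -r$ and $p-r-1\equiv -(r+1)$ modulo $p$ then gives $B_m\equiv\frac{r}{r+1}B_{p-r-1}\pmod p$, and multiplying through by $p$ produces the asserted congruence $H_{p-1}^{(r)}\equiv\frac{r}{r+1}B_{p-r-1}p\pmod{p^2}$.

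For part (i) I would avoid the analogous (and more delicate) direct reduction and instead exploit the pairing $i\leftrightarrow p-i$. Expanding $(p-i)^{-r}=(-1)^r i^{-r}\sum_{j\ge 0}\binom{r+j-1}{j}(p/i)^j$ and summing $i^{-r}+(p-i)^{-r}$ over $1\le i\le p-1$, the oddness of $r$ forces the leading terms to cancel and gives
\begin{align*}
2H_{p-1}^{(r)}=-rp\,H_{p-1}^{(r+1)}-\tfrac{r(r+1)}{2}p^2H_{p-1}^{(r+2)}-\cdots,
\end{align*}
where each $H_{p-1}^{(s)}$ is a $p$-adic integer, so all terms carrying $p^3$ or higher drop out modulo $p^3$. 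I would then feed in part (ii) at the even index $r+1$ (valid since $p\ge r+4=(r+1)+3$), namely $H_{p-1}^{(r+1)}\equiv\frac{r+1}{r+2}B_{p-r-2}p\pmod{p^2}$, together with the elementary fact that $H_{p-1}^{(r+2)}\equiv 0\pmod p$ for the odd index $r+2$ (from $i^{-s}+(p-i)^{-s}\equiv 0\pmod p$ when $s$ is odd). Substituting collapses the right-hand side to $-\frac{r(r+1)}{r+2}B_{p-r-2}p^2$ modulo $p^3$, and dividing by $2$ yields the desired formula.

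The main obstacle is the bookkeeping in part (ii): one must verify that the unique Faulhaber term of valuation $1$ genuinely dominates modulo $p^2$, which requires simultaneously controlling the Bernoulli denominators through von Staudt--Clausen and handling the astronomically large index $m=p(p-1)-r$, and then matching $B_m$ to $B_{p-r-1}$ via Kummer's congruence. Once part (ii) is established, part (i) follows almost formally from the pairing expansion, its only inputs being the already-proved even case and the vanishing of the odd-index sum modulo $p$.
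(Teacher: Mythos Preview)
The paper does not prove this lemma; it is quoted directly from Washington \cite{[Was]} without argument, so there is no ``paper's own proof'' to compare against. Your proposal, by contrast, supplies a complete elementary proof, and it is correct.

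A brief check of the key points: in part~(ii), the choice $m=p(p-1)-r$ indeed gives $i^{-r}\equiv i^m\pmod{p^2}$ by Euler, and since $m+1\equiv 1-r\not\equiv 0\pmod p$ the Faulhaber denominator is a unit; the von Staudt--Clausen bound $v_p(B_k)\ge -1$ then kills every term with $k\le m-2$, and $B_{m-1}=0$ because $m$ is even, leaving $B_m p$. The Kummer step is legitimate since $m\equiv p-r-1\pmod{p-1}$ (the difference is $(p-1)^2$) and neither index is divisible by $p-1$. In part~(i), the pairing expansion is exactly as you wrote, the term $p^2H_{p-1}^{(r+2)}$ vanishes modulo $p^3$ by the odd-index observation, and feeding in part~(ii) at $r+1$ gives the stated coefficient.

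Your route is more hands-on than Washington's original treatment via $p$-adic $L$-functions, but it has the advantage of being self-contained: everything rests on Faulhaber, von Staudt--Clausen, and Kummer's congruence, which are all standard and easy to verify. For the purposes of this paper either a citation or your argument would serve equally well.
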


\noindent
From Lemma \ref{lem4} and noticing the fact that
$v_p\big(B_{l}\big)\ge 0$ when $(p-1)\nmid l$,
one can derive the following corollary immediately.

\begin{cor}\label{cor}
Let $p\ge 5$ be a prime. Let $r$ be an integer with $1\le r\le p-3$.
Then
\begin{align}
v_p\big(H_{p-1}^{(r)}\big)\ge 1+\epsilon_{r} \label{2.1}
\end{align}
with the equality holding if and only if
$v_p\big(B_{p-1-2\lceil\frac{r}{2}\rceil}\big)=0$.
\end{cor}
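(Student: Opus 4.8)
The plan is to deduce (\ref{2.1}) directly from Washington's congruences in Lemma \ref{lem4}, splitting into the cases $r$ even and $r$ odd, and in each case reading off from the leading term when the stated bound is sharp by invoking the von Staudt--Clausen theorem (which, as noted, gives $v_p(B_l)\ge 0$ whenever $(p-1)\nmid l$). Throughout, recall that $H_{p-1}^{(r)}$ is a $p$-integral rational number, so the congruences are valuation statements.

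First I would treat the case $r$ even, so that $\epsilon_r=0$ and $2\lceil\frac{r}{2}\rceil=r$. Since $1\le r\le p-3$ we have $p\ge r+3$, so Lemma \ref{lem4}(ii) applies and gives $H_{p-1}^{(r)}\equiv\frac{r}{r+1}B_{p-r-1}p\pmod{p^2}$. The index $p-r-1$ is even and satisfies $2\le p-r-1\le p-3$, hence $(p-1)\nmid(p-r-1)$ and $v_p(B_{p-r-1})\ge 0$; moreover $r$ and $r+1$ are prime to $p$ because $r+1\le p-2$, so $v_p\big(\frac{r}{r+1}\big)=0$. Therefore the leading term has valuation $1+v_p(B_{p-r-1})\ge 1$. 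If $v_p(B_{p-r-1})=0$ this valuation equals $1<2$, so the congruence forces $v_p(H_{p-1}^{(r)})=1$; if $v_p(B_{p-r-1})\ge1$ the leading term has valuation $\ge 2$, so $v_p(H_{p-1}^{(r)})\ge 2>1$. Since $p-r-1=p-1-2\lceil\frac{r}{2}\rceil$, this is exactly (\ref{2.1}) together with the equality criterion.

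Next I would treat the case $r$ odd, so that $\epsilon_r=1$ and $2\lceil\frac{r}{2}\rceil=r+1$. Here the parity is what makes Lemma \ref{lem4}(i) applicable: since $p$ is odd and $r\le p-3$ is odd, in fact $r\le p-4$, i.e. $p\ge r+4$, and Lemma \ref{lem4}(i) gives $H_{p-1}^{(r)}\equiv-\frac{r(r+1)}{2(r+2)}B_{p-r-2}p^2\pmod{p^3}$. Again $p-r-2$ is even with $2\le p-r-2\le p-3$, so $(p-1)\nmid(p-r-2)$ and $v_p(B_{p-r-2})\ge 0$, while $v_p\big(\frac{r(r+1)}{2(r+2)}\big)=0$ because $r+2\le p-2$ and $p\ge 5$. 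Hence the leading term has valuation $2+v_p(B_{p-r-2})\ge 2$, and exactly as before $v_p(H_{p-1}^{(r)})=2$ when $v_p(B_{p-r-2})=0$ and $v_p(H_{p-1}^{(r)})\ge 3$ otherwise. Since $p-r-2=p-1-2\lceil\frac{r}{2}\rceil$, this proves (\ref{2.1}) and its equality condition in the odd case too.

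I do not expect a serious obstacle here; the only thing needing care is the routine bookkeeping. One must verify that the rational constants $\frac{r}{r+1}$ and $\frac{r(r+1)}{2(r+2)}$ are $p$-adic units, that the Bernoulli index occurring is not a multiple of $p-1$ so that von Staudt--Clausen guarantees $v_p(B)\ge 0$, and that when $r$ is odd the hypothesis $p\ge r+4$ of Lemma \ref{lem4}(i) is automatic from $r\le p-3$ (because $p-3$ is even). Everything else is a one-line ultrametric comparison of the leading term against the error term in the congruence.
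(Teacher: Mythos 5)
Your argument is correct and is precisely the derivation the paper intends: it reads off the valuation from Washington's congruences (Lemma \ref{lem4}), using that the rational constants are $p$-adic units and that the von Staudt--Clausen theorem gives $v_p(B_{p-1-2\lceil r/2\rceil})\ge 0$ since the index is even and not divisible by $p-1$. The parity bookkeeping (including that $r\le p-3$ odd forces $p\ge r+4$) is handled correctly, so nothing is missing.
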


Let $n$ and $k$ be integers with $n\ge 1$ and $k\ge 0$. Let $S_k=S_k(x_1, ..., x_n)$
and $\sigma_k=\sigma_k(x_1, ..., x_n)$ denote the $k$th {\it Newton sum} and the $k$th
{\it elementary symmetric function} of the $n$ variables $x_1, ..., x_n$, respectively.
That is,
$$
S_k=S_k(x_1, ..., x_n):=\sum_{i=1}^n x_i^k
$$
and
$$
\sigma_k=\sigma_k(x_1, ..., x_n):=\left\{\begin{array}{cl}1, & {\rm if} \ k=0,\\
\sum_{1\le i_1<\cdots<i_k\le n}x_{i_1}\cdots x_{i_k}, & {\rm if} \ 1\le k\le n,\\
0, & {\rm if} \ k>n.
\end{array}\right.
$$
Then the well known Newton-Girard formula can be stated as follows.

\begin{lem}\label{lem4'}\cite{[Ser]}
Let $n$ be a positive integer and let $k$ be a nonnegative integer.

{\rm (i).} If $1\le k\le n$, then
$$
\sum_{i=1}^k(-1)^{k-i}S_i\sigma_{k-i}+(-1)^kk\sigma_k=0.
$$

{\rm (ii).} If $k > n$, then
$$
\sum_{i=0}^n(-1)^{i}S_{k-i}\sigma_{i}=0.
$$
\end{lem}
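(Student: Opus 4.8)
The plan is to prove both identities at once by comparing coefficients in a single generating-function identity, namely the logarithmic derivative of the polynomial whose roots encode the elementary symmetric functions. Set
$$
P(t):=\prod_{j=1}^{n}(1-x_jt)=\sum_{i=0}^{n}(-1)^i\sigma_i t^i,
$$
the second equality being the defining expansion of the $\sigma_i$. First I would record the generating function of the Newton sums,
$$
\sum_{k\ge 1}S_k t^{k-1}=\sum_{j=1}^{n}\sum_{k\ge 1}x_j^k t^{k-1}=\sum_{j=1}^{n}\frac{x_j}{1-x_jt},
$$
which I read as an identity of formal power series in $t$, so that no convergence issue arises.

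The key step is to link these two objects through the logarithmic derivative of $P$. Differentiating $\log P(t)=\sum_{j=1}^n\log(1-x_jt)$ term by term gives
$$
\frac{P'(t)}{P(t)}=\sum_{j=1}^{n}\frac{-x_j}{1-x_jt}=-\sum_{k\ge 1}S_k t^{k-1},
$$
whence the functional equation $P'(t)=-P(t)\sum_{k\ge 1}S_k t^{k-1}$. I would then substitute $P(t)=\sum_{i=0}^n(-1)^i\sigma_i t^i$ and $P'(t)=\sum_{i=1}^n(-1)^i i\,\sigma_i t^{i-1}$, and compare the coefficient of $t^{k-1}$ on both sides. For every $k\ge 1$ this yields
$$
(-1)^kk\,\sigma_k=-\sum_{i=0}^{k-1}(-1)^i\sigma_i S_{k-i},
$$
with the convention $\sigma_i=0$ for $i>n$, which automatically truncates the right-hand sum at $i=n$.

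It remains only to rearrange this master identity into the two stated forms. For $1\le k\le n$, re-indexing the right-hand sum by $j=k-i$ turns $\sum_{i=0}^{k-1}(-1)^i\sigma_iS_{k-i}$ into $\sum_{j=1}^k(-1)^{k-j}S_j\sigma_{k-j}$, so moving the $(-1)^kk\sigma_k$ term across gives part (i). For $k>n$ the left-hand side vanishes because $\deg P'=n-1<k-1$, while on the right the vanishing $\sigma_i=0$ forces $i\le n$ and the constraint $k-i\ge 1$ is automatic since $k>n\ge i$; this is exactly part (ii). The only genuine care needed is the bookkeeping of summation ranges and of the alternating signs under the substitution $j=k-i$, together with checking that truncation at $i=n$ rather than at $i=k-1$ is precisely what separates the $k>n$ regime from the $1\le k\le n$ regime. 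I expect this sign-and-range bookkeeping to be the sole obstacle, since all of the analytic content is concentrated in the one-line logarithmic-derivative identity.
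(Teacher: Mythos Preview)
Your argument is correct: the logarithmic-derivative identity $P'(t)=-P(t)\sum_{k\ge1}S_kt^{k-1}$ for $P(t)=\prod_j(1-x_jt)$ is the standard route to Newton--Girard, and your coefficient extraction and re-indexing are carried out cleanly. Note that the paper does not actually prove this lemma; it is simply quoted from S\'eroul's book \cite{[Ser]} as a known result, so there is no ``paper's proof'' to compare against---your generating-function derivation is exactly the kind of proof one would find in such a reference.
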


\noindent
For any integer $r$ with $0\le r\le p-1$, it is easy to see that
$$
S_r\Big(1,\frac{1}{2},\cdots,\frac{1}{p-1}\Big)=\sum_{i=1}^{p-1}\frac{1}{i^r}=H_{p-1}^{(r)}
$$
and
$$
\sigma_r\Big(1,\frac{1}{2},\cdots,\frac{1}{p-1}\Big)=H(p-1,r),$$
where $H(p-1, 0)=1$. Therefore by using Lemma \ref{lem4'} and
Corollary \ref{cor}, we can deduce the following result.

\begin{lem}\label{lem5'}
Let $p$ be an odd prime and let $r$ be an integer.

{\rm (i).} If $r\ge1$ is odd and $p\geq r+4$, then
\begin{align}
H(p-1,r)\equiv \frac{1}{r}H_{p-1}^{(r)} \pmod {p^3}.\label{2.2}
\end{align}

{\rm (ii).} If $r\geq2$ is even and $p\geq r+3$, then
\begin{align}
H(p-1,r)\equiv -\frac{1}{r}H_{p-1}^{(r)} \pmod {p^2}. \label{2.3}
\end{align}

\end{lem}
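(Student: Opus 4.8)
The plan is to apply the Newton--Girard identities of Lemma~\ref{lem4'} to the $n=p-1$ variables $x_i=1/i$ ($1\le i\le p-1$), for which $S_k=H_{p-1}^{(k)}$ and $\sigma_k=H(p-1,k)$, and to extract a recursion for $H(p-1,r)$ in terms of $H_{p-1}^{(r)}$. Fix $r$ with $1\le r\le p-3$, so $r<p-1$ and $v_p(r)=0$. Applying Lemma~\ref{lem4'}(i) with $k=r$ and splitting off the $i=r$ term $S_r\sigma_0=H_{p-1}^{(r)}$ gives
\begin{align*}
r\,H(p-1,r)=(-1)^{r+1}H_{p-1}^{(r)}+(-1)^{r+1}\Sigma_r,\qquad
\Sigma_r:=\sum_{i=1}^{r-1}(-1)^{r-i}H_{p-1}^{(i)}H(p-1,r-i),
\end{align*}
so everything reduces to bounding $v_p(\Sigma_r)$ and then dividing by the $p$-adic unit $r$.

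The argument I would run is a strong induction on $r$ (for $1\le r\le p-3$) that proves \eqref{2.2}, \eqref{2.3} simultaneously with the auxiliary bound $v_p\big(H(p-1,r)\big)\ge 1+\epsilon_r$, which is what drives the induction. The base case $r=1$ is immediate: $\Sigma_1$ is empty, so $H(p-1,1)=H_{p-1}^{(1)}$ and $v_p(H_{p-1}^{(1)})\ge 2$ by Corollary~\ref{cor}. For the step, Corollary~\ref{cor} gives $v_p(H_{p-1}^{(i)})\ge 1+\epsilon_i$ for $1\le i\le p-3$ and the inductive hypothesis gives $v_p(H(p-1,r-i))\ge 1+\epsilon_{r-i}$ for $1\le i\le r-1$, so each term of $\Sigma_r$ has valuation at least $2+\epsilon_i+\epsilon_{r-i}$. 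If $r$ is even this is $\ge 2$, hence $r\,H(p-1,r)\equiv -H_{p-1}^{(r)}\pmod{p^2}$ (which is \eqref{2.3} after dividing by $r$) and $v_p(H(p-1,r))\ge\min\{v_p(H_{p-1}^{(r)}),v_p(\Sigma_r)\}\ge 1=1+\epsilon_r$. If $r$ is odd then $i$ and $r-i$ have opposite parity, so $\epsilon_i+\epsilon_{r-i}=1$ and each term has valuation $\ge 3$; combining with $v_p(H_{p-1}^{(r)})\ge 2$ (Corollary~\ref{cor}, noting odd $r\le p-3$ forces $r\le p-4$) yields $r\,H(p-1,r)\equiv H_{p-1}^{(r)}\pmod{p^3}$ (which is \eqref{2.2}) and $v_p(H(p-1,r))\ge\min\{2,3\}=1+\epsilon_r$. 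The bookkeeping to check is just that $1\le i\le r\le p-3$ keeps all indices inside the hypotheses of Corollary~\ref{cor} and that the parities split as claimed.

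The one genuine obstacle is that \eqref{2.2} cannot be proved in isolation: reaching modulus $p^3$ requires knowing $v_p(H(p-1,r-i))\ge 2$ for the odd indices occurring in $\Sigma_r$ (the terms with $i$ even), a statement that is itself no weaker than the lemma. Folding the valuation bound $v_p(H(p-1,r))\ge 1+\epsilon_r$ into the induction is exactly what resolves this and keeps the proof self-contained. Alternatively, one may import the classical Wolstenholme-type facts that $v_p(H(p-1,m))\ge 1$ for $1\le m\le p-2$ and $v_p(H(p-1,m))\ge 2$ for odd $m$ in that range, after which the displayed recursion yields \eqref{2.2} and \eqref{2.3} directly; either way the substance is the valuation estimate on $\Sigma_r$.
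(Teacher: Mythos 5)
Your proposal is correct and follows essentially the same route as the paper: it applies the Newton--Girard identity (Lemma \ref{lem4'}) to $x_i=1/i$ to get the recursion $rH(p-1,r)=(-1)^{r+1}\big(H_{p-1}^{(r)}+\Sigma_r\big)$, then runs an induction on $r$ using Corollary \ref{cor} and the parity split $\epsilon_i+\epsilon_{r-i}$ to bound $v_p(\Sigma_r)$ by $2$ (even $r$) or $3$ (odd $r$). The only cosmetic difference is that you carry the auxiliary bound $v_p(H(p-1,r))\ge 1+\epsilon_r$ explicitly in the induction hypothesis, whereas the paper extracts the same valuations from the already-proved congruences for smaller indices (and treats $r=2$ as a separate base case).
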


\begin{proof}
Since $H(p-1,1)=H_{p-1}^{(1)}$, Lemma \ref{lem5'} is obviously true when $r=1$.
Now let $2\le r\le p-3$. Using Lemma \ref{lem4'} and also noting that $H(p-1,0)=1$,
one derives that
\begin{align}
H(p-1,r)&=\frac{1}{r}\sum_{i=1}^{r}(-1)^{i-1}H(p-1,r-i)H_{p-1}^{(i)}\notag\\
&=\frac{1}{r}\sum_{i=1}^{r-1}(-1)^{i-1}H(p-1,r-i)H_{p-1}^{(i)}-\frac{(-1)^{r}}{r}H_{p-1}^{(r)}. \label{2.4}
\end{align}

If $r=2$, then it follows from (\ref{2.4}) that
\begin{align}
H(p-1,r)=H(p-1,2)
=\frac{1}{2}H(p-1,1)H_{p-1}^{(1)}-\frac{1}{2}H_{p-1}^{(2)}
=\frac{1}{2}\big(H_{p-1}^{(1)}\big)^2-\frac{1}{2}H_{p-1}^{(2)}.\label{2.5}
\end{align}
From (\ref{2.1}), one yields that $v_p\big(H_{p-1}^{(2)}\big)\ge1$ and
\begin{align}
v_p\left(\big(H_{p-1}^{(1)}\big)^2\right)=2v_p\big(H_{p-1}^{(1)}\big)\ge 4.\label{2.6}
\end{align}
Using (\ref{2.5}) and (\ref{2.6}), we then deduce that
\begin{align*}
H(p-1,r)=H(p-1,2)\equiv -\frac{1}{2}H_{p-1}^{(2)}=-\frac{1}{r}H_{p-1}^{(r)}\pmod {p^4}.
\end{align*}
Hence Lemma \ref{lem5} is true when $r=2$.

Now let $3\le r\le p-3$. Suppose that Lemma \ref{lem5'} holds for any integer $e$
with $2\le e\le r-1$. In the following, we show that Lemma \ref{lem5'} is still
true for the $r$ case.

For any integer $i$ with $1\le i \le r-1$,
by the inductive hypothesis and Corollary \ref{cor}, one obtains that
$v_p(H(p-1,r-i))\ge 1$ and $v_p\big(H_{p-1}^{(i)}\big)\ge 1$.

If $r$ is even, then one can conclude that
$v_p\big(H(p-1,r-i)H_{p-1}^{(i)}\big)\ge 2$. Since $2\le r\le p-3$, one
has $\frac{1}{r}$ is a $p$-adic unit and so $v_p(\frac{1}{r})=0$. Hence
\begin{align}
v_p\Big(\frac{1}{r}\sum_{i=1}^{r-1}(-1)^{i-1}H(p-1,r-i)H_{p-1}^{(i)}\Big)\ge 2.\label{2.7}
\end{align}
Thus (\ref{2.4}) and (\ref{2.7}) imply the truth of (\ref{2.3}).
So (\ref{2.3}) is proved.

If $r$ is odd, then one of $i$ and $r-i$ must be odd. So it
follows from Corollary \ref{cor} and the inductive
hypothesis that either $v_p\big(H_{p-1}^{(i)}\big)\ge 2$
or $v_p(H(p-1,r-i))\ge 2$. Hence
$v_p\big(H(p-1,r-i)H_{p-1}^{(i)}\big)\ge 3$,
which infers that
\begin{align}
v_p\Big(\frac{1}{r}\sum_{i=1}^{r-1}(-1)^{i-1}H(p-1,r-i)H_{p-1}^{(i)}\Big)\ge 3.\label{2.8}
\end{align}
Therefore (\ref{2.2}) follows from (\ref{2.4}) and (\ref{2.8}).

This completes the proof of Lemma \ref{lem5'}.
\end{proof}

Combining Lemma \ref{lem4} with Lemma \ref{lem5'},
one can easily obtain the following congruences.

\begin{lem}\label{lem5}
Let $p$ be an odd prime and let $r$ be an integer.

{\rm (i).} If $r\ge1$ is odd and $p\geq r+4$, then
$$ H(p-1,r)\equiv -\frac{r+1}{2(r+2)}B_{p-r-2}p^2 \pmod {p^3}.$$

{\rm (ii).} If $r\geq2$ is even and $p\geq r+3$, then
$$ H(p-1,r)\equiv -\frac{1}{r+1}B_{p-r-1}p \pmod {p^2}. $$
\end{lem}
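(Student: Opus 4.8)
The plan is to obtain Lemma~\ref{lem5} as an immediate consequence of the two congruences already in hand: Lemma~\ref{lem5'}, which relates the elementary symmetric function $H(p-1,r)$ to the power sum $H_{p-1}^{(r)}$, and Lemma~\ref{lem4} (Washington's congruence), which evaluates $H_{p-1}^{(r)}$ in terms of a single Bernoulli number. Essentially one just substitutes the latter into the former and cancels a $p$-adic unit, so the proof will be very short.

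For part~(i), assume $r\ge 1$ is odd and $p\ge r+4$. First I would invoke Lemma~\ref{lem5'}(i) to get $H(p-1,r)\equiv \frac{1}{r}H_{p-1}^{(r)}\pmod{p^3}$, and then Lemma~\ref{lem4}(i) to get $H_{p-1}^{(r)}\equiv -\frac{r(r+1)}{2(r+2)}B_{p-r-2}p^2\pmod{p^3}$. The inequality $p\ge r+4$ forces $1\le r\le p-4$, so both $r$ and $r+2$ are coprime to $p$; hence $\frac{1}{r}$ is a $p$-adic unit, multiplying the second congruence by it preserves the modulus $p^3$, and the simplification $\frac{1}{r}\cdot\frac{r(r+1)}{2(r+2)}=\frac{r+1}{2(r+2)}$ yields $H(p-1,r)\equiv -\frac{r+1}{2(r+2)}B_{p-r-2}p^2\pmod{p^3}$, as desired. (When $r=1$ the first step is simply the identity $H(p-1,1)=H_{p-1}^{(1)}$, so no separate case is needed.)

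For part~(ii), assume $r\ge 2$ is even and $p\ge r+3$, so $2\le r\le p-3$ and thus $r$ and $r+1$ are coprime to $p$. I would combine Lemma~\ref{lem5'}(ii), namely $H(p-1,r)\equiv -\frac{1}{r}H_{p-1}^{(r)}\pmod{p^2}$, with Lemma~\ref{lem4}(ii), namely $H_{p-1}^{(r)}\equiv \frac{r}{r+1}B_{p-r-1}p\pmod{p^2}$; multiplying the latter by the $p$-adic unit $-\frac{1}{r}$ and simplifying $-\frac{1}{r}\cdot\frac{r}{r+1}=-\frac{1}{r+1}$ gives $H(p-1,r)\equiv -\frac{1}{r+1}B_{p-r-1}p\pmod{p^2}$. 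Since each part is a one-line substitution, there is no genuine obstacle here; the only thing to watch is the bookkeeping verifying that the integers $r$, $r+1$, $r+2$ occurring in the denominators are honestly $p$-adic units under the stated hypotheses, so that the divisions are legitimate and no $p$-adic precision is lost when passing from the modulus on $H_{p-1}^{(r)}$ to that on $H(p-1,r)$.
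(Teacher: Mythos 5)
Your proposal is correct and is exactly the paper's route: the paper derives Lemma \ref{lem5} by combining Washington's congruences (Lemma \ref{lem4}) with Lemma \ref{lem5'}, i.e.\ the same substitution and cancellation of the $p$-adic unit $\frac{1}{r}$ that you carry out. The unit checks on $r$, $r+1$, $r+2$ under the hypotheses $p\ge r+4$ (resp.\ $p\ge r+3$) are the only points needing care, and you have handled them.
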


Evidently, one has $s(p,1)=(p-1)!\equiv -1\pmod{p}$
by Wilson theorem, and $s(p,p)=1$. Also it is known
that $s(p,k)\equiv 0 \pmod{p}$ if $2\le k\le p-1$ (see, for example, \cite{[LC]}).
Now by using Lemma \ref{lem5}, we can derive the following
lower bound on the $p$-adic valuation of $s(p, k)$ for
any integer $k$ with $2\le k\le p-1$.

\begin{cor}\label{cor2.10}
Let $p\ge 5$ be a prime and let $k$ be an integer with $1\le k\le p$.
Then
\begin{align}
v_p(s(p,k))=
\left\{\begin{array}{cl}0, & {\it if} \ k=1\ {\it or}\ k=p,
\\1, & {\it if} \ k=p-1,
\end{array}\right.\label{2.9}
\end{align}
and if $2\le k\le p-2$, then
\begin{align}
v_p(s(p,k))\ge 1+\epsilon_{k-1} \label{2.10}
\end{align}
with the equality holding if and only if
$v_p\big(B_{p-1-2\lfloor\frac{k}{2}\rfloor}\big)=0$.
\end{cor}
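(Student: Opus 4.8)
The plan is to pass from $s(p,k)$ to the generalized harmonic number $H(p-1,k-1)$ through the identity $s(n+1,k+1)=n!\,H(n,k)$ with $n=p-1$, which gives $s(p,k)=(p-1)!\,H(p-1,k-1)$; since $(p-1)!$ is a $p$-adic unit, this yields $v_p(s(p,k))=v_p(H(p-1,k-1))$ for every $k$ in the relevant range. The three exceptional values in (\ref{2.9}) are then immediate: $s(p,1)=(p-1)!$ and $s(p,p)=1$ are $p$-adic units, so both have valuation $0$, while $s(p,p-1)=\binom{p}{2}=\tfrac{1}{2}p(p-1)$ has valuation $1$.

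For $2\le k\le p-2$ I would put $r=k-1$, so $1\le r\le p-3$, and split on the parity of $r$ (equivalently of $k$). If $k$ is even, then $r$ is odd and $\epsilon_{k-1}=1$; since $p$ is odd we in fact have $k\le p-3$, hence $p\ge k+3=r+4$, and Lemma \ref{lem5}(i) applies to give $H(p-1,r)\equiv -\tfrac{r+1}{2(r+2)}B_{p-r-2}\,p^2\pmod{p^3}$. Here the coefficient $\tfrac{r+1}{2(r+2)}=\tfrac{k}{2(k+1)}$ is a $p$-adic unit (each of $k$, $k+1$, $2$ is prime to $p$ in this range), and $p-r-2=p-k-1$ lies in $[2,p-3]$, so $(p-1)\nmid(p-k-1)$ and thus $v_p(B_{p-k-1})\ge 0$ by von Staudt--Clausen. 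Consequently $v_p(H(p-1,r))\ge 2=1+\epsilon_{k-1}$ with equality exactly when $v_p(B_{p-k-1})=0$, and since $k$ is even we have $p-k-1=p-1-2\lfloor k/2\rfloor$, the index in the statement. If instead $k$ is odd, then $r$ is even and $\ge 2$ (so $k\ge 3$), $\epsilon_{k-1}=0$, and $k\le p-2$ forces $p\ge k+2=r+3$; Lemma \ref{lem5}(ii) then gives $H(p-1,r)\equiv -\tfrac{1}{r+1}B_{p-r-1}\,p\pmod{p^2}$, where $\tfrac{1}{r+1}=\tfrac{1}{k}$ is a $p$-adic unit and $p-r-1=p-k\in[2,p-3]$, so again $v_p(B_{p-k})\ge 0$. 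This yields $v_p(H(p-1,r))\ge 1=1+\epsilon_{k-1}$ with equality iff $v_p(B_{p-k})=0$, and $p-k=p-1-2\lfloor k/2\rfloor$ for odd $k$. Combining the two cases gives (\ref{2.10}).

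The step that needs the most care is the bookkeeping that matches the hypotheses of Lemma \ref{lem5} ($p\ge r+4$ for odd $r$, $p\ge r+3$ for even $r$) precisely to the range $2\le k\le p-2$ — in particular this is why $k=p-1$ cannot be folded into the general argument, since there $r=p-2$ is too large and the naive formula would even predict the wrong bound — together with checking that $p-r-2$ and $p-r-1$ are not divisible by $p-1$ (so that $v_p(B_{\cdot})\ge 0$ holds and the stated equivalence is meaningful) and that these indices coincide with $p-1-2\lfloor k/2\rfloor$. Beyond that the corollary is essentially a restatement of Lemma \ref{lem5}, so I do not expect any genuine obstacle.
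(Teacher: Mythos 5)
Your proposal is correct and follows essentially the same route as the paper: the exceptional values $k=1,p-1,p$ are computed directly, and for $2\le k\le p-2$ one passes to $H(p-1,k-1)$ via $s(p,k)=(p-1)!\,H(p-1,k-1)$ and applies Lemma \ref{lem5} with $r=k-1$, using $v_p\big(B_{p-1-2\lfloor k/2\rfloor}\big)\ge 0$ to read off the bound and the equality criterion. The only difference is that you spell out the parity bookkeeping and the hypothesis checks ($p\ge r+4$, resp.\ $p\ge r+3$, and the index identification $p-1-2\lfloor k/2\rfloor$) which the paper leaves implicit.
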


\begin{proof}
Since $v_p(s(p,1))=v_p((p-1)!)=0$,
$v_p(s(p,p-1))=v_p\left(\binom{p}{2}\right)=1$ and
$v_p(s(p,p))=v_p(1)=0$, the equality (\ref{2.9}) is clearly true.

Now let $2\le k\le p-2$. Note that $v_p\big(B_{p-1-2\lfloor\frac{k}{2}\rfloor}\big)\ge 0$.
So replacing $r$ by $k-1$ in Lemma \ref{lem5} gives us that
\begin{align}
v_p(H(p-1,k-1))\ge1+\epsilon_{k-1}\label{2.11}
\end{align}
with the equality holding if and only if
$v_p\big(B_{p-1-2\lfloor\frac{k}{2}\rfloor}\big)=0$.
Since
$$
v_p(s(p,k))=v_p((p-1)!H(p-1,k-1))=v_p(H(p-1,k-1)),
$$
(\ref{2.10}) follows immediately from  (\ref{2.11}).

This finishes the proof of Corollary \ref{cor2.10}.
\end{proof}

For any positive integer $m$, it follows from Lemma \ref{lem3}
that $s_m(n,k)\equiv s(n,k)\pmod {m}$. Now together
with Theorem \ref{thm2}, we can obtain the following result.

\begin{lem}\label{lem7}
Let $p\ge 5$ be a prime. Let $a$ and $n$ be positive integers such that
$(a,p)=1$. For any integer $k$ with $0\le k\le ap^n$,
if $2\nmid k$, then
\begin{align}\label{2.12}
v_p(s_{p^n}(ap^n, ap^n-k))\ge n,
\end{align}
and if $2\mid k$, then
\begin{align}\label{2.13}
s_{p^n}(ap^n, ap^n-k)\equiv s(ap^n, ap^n-k) \pmod{p^{2n}}.
\end{align}
\end{lem}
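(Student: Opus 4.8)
The plan is to expand $s_{p^n}(ap^n,ap^n-k)$ via Lemma~\ref{lem3} and then to bound the individual terms using Theorem~\ref{thm2}. Applying Lemma~\ref{lem3} with upper parameter $ap^n$, lower parameter $ap^n-k$ and shift $p^n$, and then substituting $j=i-(ap^n-k)$, I would obtain
\begin{align*}
s_{p^n}(ap^n,ap^n-k)=\sum_{j=0}^{k}\binom{ap^n-k+j}{j}\,s(ap^n,ap^n-k+j)\,p^{nj}.
\end{align*}
The $j=0$ summand is exactly $s(ap^n,ap^n-k)$, and for each $j\ge 1$ the $j$th summand is an integer multiple of $p^{nj}$; in particular every summand with $j\ge 2$ is divisible by $p^{2n}$, while the $j=1$ summand equals $(ap^n-k+1)\,s(ap^n,ap^n-k+1)\,p^n$ and hence has $p$-adic valuation at least $n+v_p(s(ap^n,ap^n-k+1))$.

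The one genuine ingredient is the fact that, for every \emph{odd} integer $\kappa$ with $1\le\kappa\le ap^n-1$, Theorem~\ref{thm2} yields $v_p(s(ap^n,ap^n-\kappa))\ge n$: in its first branch $v_p(s(ap^n,ap^n-\kappa))=v_p(s(ap^n,ap^n-\kappa+1))+v_p(ap^n-\kappa)+n\ge n$, and in its second branch $v_p(s(ap^n,ap^n-\kappa))\ge v_p(ap^n-\kappa)+3n\ge n$. If $k$ is odd, I would apply this with $\kappa=k$ to see that the $j=0$ term above has valuation $\ge n$; since every other term is divisible by $p^n$, this gives $v_p(s_{p^n}(ap^n,ap^n-k))\ge n$, that is, \eqref{2.12}. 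If $k$ is even, then $k-1$ is odd and $ap^n-k+1=ap^n-(k-1)$, so applying the same fact with $\kappa=k-1$ gives $v_p(s(ap^n,ap^n-k+1))\ge n$; hence the $j=1$ term is divisible by $p^{2n}$, as is every term with $j\ge 2$, so that $s_{p^n}(ap^n,ap^n-k)\equiv s(ap^n,ap^n-k)\pmod{p^{2n}}$, that is, \eqref{2.13}.

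I do not anticipate a real obstacle: once the expansion of Lemma~\ref{lem3} is written down, the argument is mostly bookkeeping, the substantive point being the lower bound $v_p(s(ap^n,ap^n-\kappa))\ge n$ for odd $\kappa$ that is read off from Theorem~\ref{thm2}. The only care needed is with the boundary values of $k$: if $k=ap^n$ is odd then $s(ap^n,ap^n-k)=s(ap^n,0)=0$, so \eqref{2.12} is trivial; if $k=0$ then both sides of \eqref{2.13} equal $s(ap^n,ap^n)=1$; and when $k=ap^n$ is even one has $\kappa=k-1=ap^n-1$, which still lies in the range $1\le\kappa\le ap^n-1$ of Theorem~\ref{thm2}. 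Thus these edge cases are handled directly and the proof goes through.
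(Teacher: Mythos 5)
Your proposal is correct and follows essentially the same route as the paper: expand $s_{p^n}(ap^n,ap^n-k)$ via Lemma~\ref{lem3}, discard the terms with $p^{nj}$, $j\ge 2$, modulo $p^{2n}$, and use Theorem~\ref{thm2} to get $v_p(s(ap^n,ap^n-\kappa))\ge n$ for odd $\kappa$, applied with $\kappa=k$ (odd case) or $\kappa=k-1$ (even case). The only difference is cosmetic: you dispose of the boundary case $k=ap^n$ through $s(ap^n,0)=0$ and Theorem~\ref{thm2} at $\kappa=ap^n-1$, whereas the paper computes $v_p(s_{p^n}(ap^n,0))$ directly with Legendre's formula; both are fine.
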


\begin{proof}
Since $s_{p^n}(ap^n, ap^n)=s(ap^n, ap^n)=1$, (\ref{2.13}) is
obviously true when $k=0$. If $k=1$, then by Lemma
\ref{lem3}, one obtains that
\begin{align*}
s_{p^n}(ap^n, ap^n-1)=s(ap^n, ap^n-1)+ap^{2n}.
\end{align*}
Since $v_p(s(ap^n, ap^n-1))=n$, it follows that
$v_p(s_{p^n}(ap^n, ap^n-1))=n$. Thus (\ref{2.12}) is proved when $k=1$.
For the case that $k=ap^n$, we have
$$
v_p(s_{p^n}(ap^n, 0))=
v_p\Big(\frac{((a+1)p^n-1)!}{(p^n-1)!}\Big)
=\frac{ap^n-d_p(a)}{p-1}\ge \frac{a(p^n-1)}{p-1}
=a\sum_{i=0}^{n-1}p^i\ge an.
$$
Hence (\ref{2.12}) is clearly true when $k=ap^n$ is odd.
If $k=ap^n$ is even, then $a$ is even and so $a\ge 2$,
which infers that $v_p(s_{p^n}(ap^n, 0))\ge 2n$.
Thus together with the fact that $s(ap^n,0)=0$,
we can derive that (\ref{2.13}) holds when $k$ is even.

Now let $2\le k\le ap^n-1$. From Lemma \ref{lem3}, one deduces that
\begin{align}\label{2.14}
s_{p^n}(ap^n, ap^n-k)=
&s(ap^n, ap^n-k)+s(ap^n, ap^n-k+1)(ap^n-k+1)p^n\notag\\
&+\sum_{i=ap^n-k+2}^{ap^n}s(ap^n, i)\binom{i}{i-ap^n+k}p^{n(i-ap^n+k)}\notag\\
\equiv &s(ap^n, ap^n-k)+s(ap^n, ap^n-k+1)(ap^n-k+1)p^n \pmod {p^{2n}}.
\end{align}
If $k$ is odd, then by Theorem \ref{thm2}, one gets that $v_p(s(ap^n, ap^n-k))\ge n$.
Therefore it follows from (\ref{2.14}) that $v_p(s_{p^n}(ap^n, ap^n-k))\ge n$
as desired. If $k$ is even, then $k-1$ is odd and so $v_p(s(ap^n, ap^n-k+1))
=v_p(s(ap^n, ap^n-(k-1)))\ge n$. Thus (\ref{2.14}) implies the
truth of (\ref{2.13}) as required.

The proof of Lemma \ref{lem7} is complete.
\end{proof}

\section{Proofs of Theorems \ref{thm2} and \ref{thm1.5}}

In this section, we present the proofs of Theorems \ref{thm2}
and \ref{thm1.5}. We begin with the proof of Theorem \ref{thm2}. \\

\noindent
\textbf{Proof of Theorem \ref{thm2}.}
Let $p\ge 5$ be a prime and let $a$ and $n$ be
positive integers such that $(a,p)=1$. Let $k$ be an odd
integer with $1\le k\le ap^n-1$. Since $k$ is odd, replacing
in Lemma \ref{lem1} $n$ and $k$ by $ap^n$ and $ap^n-k$,
respectively, gives us that
\begin{align}\label{3.1}
s(ap^n, ap^n-k)&=\frac{1}{2}\sum_{i=ap^n-k+1}^{ap^n}(-1)^{ap^n-i}(ap^n)^{i-ap^n+k}
\binom{i-1}{i-ap^n+k}s(ap^n, i).
\end{align}
It follows immediately from (\ref{3.1}) that $v_p(s(ap^n, ap^n-k))\ge n$.

Note that $p\ge 5$ and $(a,p)=1$. If $k=1$, then
$$
v_p(s(ap^n, ap^n-k+1))=v_p(s(ap^n, ap^n))=v_p(1)=0
$$
and
$$
v_p(s(ap^n, ap^n-k))=v_p(s(ap^n, ap^n-1))
=v_p\Big(\binom{ap^n}{2}\Big)
=v_p\Big(\frac{ap^n(ap^n-1)}{2}\Big)=n.
$$
So Theorem \ref{thm2} is clearly true when $k=1$.

Now let $3\le k\le ap^n-1$. By (\ref{3.1}), one deduces that
\begin{align}\label{3.2}
s(ap^n, ap^n-k)= \frac{1}{2}(ap^n(ap^n-k)s(ap^n, ap^n-k+1)+L),
\end{align}
where
\begin{align} \label{3.3}
L=\sum_{i=ap^n-k+2}^{ap^n-k+3}(-1)^{ap^n-i}(ap^n)^{i-ap^n+k}
\binom{i-1}{i-ap^n+k}s(ap^n, i)+\Delta
\end{align}
with $\Delta$ being an integer such that $v_p(\Delta)\ge 4n$.
Since $p\ge 5$ and $(a,p)=1$ and $3\le k\le ap^n-1$,
it is easy to check that for $i=ap^n-k+2$ and $i=ap^n-k+3$, one has
\begin{align}\label{3.4}
v_p\Big(\binom{i-1}{i-ap^n+k}\Big)\ge v_p(ap^n-k).
\end{align}
Notice that $k-2$ is odd. Then
\begin{align}\label{3.5}
v_p(s(ap^n, ap^n-k+2))=v_p(s(ap^n, ap^n-(k-2)))\ge n.
\end{align}
We claim that
\begin{align}\label{3.6}
v_p(\Delta)\ge v_p(ap^n-k)+3n.
\end{align}
Then it follows from (\ref{3.3}) to (\ref{3.5}) and claim (\ref{3.6}) that
\begin{align}\label{3.7}
v_p(L)&\ge \min\Big\{v_p\Big(\sum_{i=ap^n-k+2}^{ap^n-k+3}
(-1)^{ap^n-i}(ap^n)^{i-ap^n+k}\binom{i-1}{i-ap^n+k}s(ap^n, i)\Big),
v_p(\Delta)\Big\}\notag\\
&\ge v_p(ap^n-k)+3n.
\end{align}

If $v_p(s(ap^n, ap^n-k+1))\ge 2n$, then one can easily get that
\begin{align}\label{3.8}
v_p(ap^n(ap^n-k)s(ap^n, ap^n-k+1))\ge v_p(ap^n-k)+3n.
\end{align}
Hence by (\ref{3.2}), (\ref{3.7}) and (\ref{3.8}),
we obtain that
\begin{align*}
v_p(s(ap^n, ap^n-k))
&\ge \min\{v_p(ap^n(ap^n-k)s(ap^n, ap^n-k+1)),v_p(L)\}\\
&\ge v_p(ap^n-k)+3n
\end{align*}
as desired.

If $v_p(s(ap^n, ap^n-k+1))\le 2n-1$, then from (\ref{3.7}) one derives that
\begin{align}\label{3.9}
v_p(ap^n(ap^n-k)s(ap^n, ap^n-k+1))\le v_p(ap^n-k)+3n-1< v_p(L).
\end{align}
Since $p\ge 5$, by (\ref{3.2}) and (\ref{3.9}) together
with the isosceles triangle principle (see, for example, \cite{[K]}),
we arrive at
\begin{align*}
v_p(s(ap^n, ap^n-k))&=v_p(ap^n(ap^n-k)s(ap^n, ap^n-k+1)+L)\\
&=v_p(ap^n(ap^n-k)s(ap^n, ap^n-k+1))\\
&=v_p(s(ap^n, ap^n-k+1))+v_p(ap^n-k)+n
\end{align*}
as expected. So to finish the proof of Theorem \ref{thm2},
it remains to show the truth of claim (\ref{3.6}) that
will be done in what follows.

If $k=3$, then $\Delta$ is empty sum and so
$\Delta=0$. Since $v_p(0)=+\infty$, claim (\ref{3.6}) is true
if $k=3$. In the following one lets $k\ge 5$. One has
\begin{align}\label{3.10'}
\Delta=\sum_{i=ap^n-k+4}^{ap^n}(-1)^{ap^n-i}\Delta_i
\end{align}
where
\begin{align*}
\Delta_i:=(ap^n)^{i-ap^n+k}\binom{i-1}{i-ap^n+k}s(ap^n, i).
\end{align*}
Let $i$ be any integer with $ap^n-k+4\le i\le ap^n$. Then one
can write $i:=ap^n-k+j$ for some integer $j$ with $4\le j\le k$. So
\begin{align}\label{3.10}
v_p(\Delta_i)\ge nj+v_p\Big(\binom{ap^n-k+j-1}{j}\Big).
\end{align}
In what follows, we show that the following $p$-adic estimate holds:
\begin{align}\label{3.11}
v_p(\Delta_i)\ge v_p(ap^n-k)+3n.
\end{align}
The proof of (\ref{3.11}) is divided into the following two cases.

{\bf Case 1.} $v_p(ap^n-k)\le n$. It follows from the
hypothesis $j\ge 4$ and (\ref{3.10}) that
$v_p(\Delta_i)\ge jn\ge 4n\ge v_p(ap^n-k)+3n$.
Thus (\ref{3.11}) is true in this case.

{\bf Case 2.} $v_p(ap^n-k)\ge n+1$. We have
\begin{align*}
\binom{ap^n-k+j-1}{j}=\frac{(ap^n-k+j-1)(ap^n-k+j-2)\cdots(ap^n-k)}{j!}.
\end{align*}

{\bf Subcase 2.1.} $v_p(j')< v_p(ap^n-k)$ for any integer $j'$
with $1\le j'\le j-1$. Then $v_p(ap^n-k+j')=v_p(j')$. It implies that
\begin{align}\label{3.12}
v_p\Big(\binom{ap^n-k+j-1}{j}\Big)=v_p(ap^n-k)-v_p(j).
\end{align}
Since $j\ge 4$ and $p\ge 5$, one can deduce that $j\ge v_p(j)+4$.
In fact, if $v_p(j)=0$, then $j\ge v_p(j)+4$ and if $v_p(j)\ge 1$,
then $j\ge p^{v_p(j)}\ge v_p(j)+4$ as expected. Now by
(\ref{3.10}) and (\ref{3.12}), we have
\begin{align*}
v_p(\Delta_i)
&\ge nj+v_p(ap^n-k)-v_p(j)\\
&\ge v_p(ap^n-k)+4n+v_p(j)(n-1)\\
&>v_p(ap^n-k)+3n
\end{align*}
as required. So (\ref{3.11}) holds in this case.

{\bf Subcase 2.2.} $v_p(j')\ge v_p(ap^n-k)\ge n+1$ for some integer
$j'$ with $1\le j'\le j-1$. Then $p^{v_p(ap^n-k)}\ge v_p(ap^n-k)+2$
and by (\ref{3.10}), one has
\begin{align*}
v_p(\Delta_i)&\ge nj\ge nj'+n\ge np^{v_p(j')}+n
\ge np^{v_p(ap^n-k)}+n\ge v_p(ap^n-k)+3n
\end{align*}
as desired. Therefore (\ref{3.11}) is true in this case.
So (\ref{3.11}) is proved.

Finally, from (\ref{3.10'}) and (\ref{3.11}) one can deduce immediately that
$$v_p(\Delta)\ge \min_{ap^n-k+4\le i\le ap^n}\{v_p(\Delta_i)\}\ge v_p(ap^n-k)+3n$$
as (\ref{3.6}) claimed. This completes the proof of claim
(3.6) and that of Theorem \ref{thm2}.  \qed \\

Let $i$ be an integer such that $1\le i\le ap^n-1$. We remark
that if $ap^n+i$ is odd, then using Theorem \ref{thm2}
we can deduce that $v_p(s(ap^n,i))\ge n$ since
one may write $i=ap^n-i'$ for some integer $i'$
with $1\le i'\le ap^n-1$, where $i'$ is odd
if and only if $ap^n+i$ is odd.

We can now use Theorem \ref{thm2} to show Theorem \ref{thm1.5}. \\

\noindent
\textbf{Proof of Theorem \ref{thm1.5}.}
Let $p$ be a prime with $p\ge 5$. Let $a$ and $k$ be positive
integers such that $(a,p)=1$ and $k\ge 3$ being odd with
the condition
\begin{align}\label{3.13}
\exists n_1\in {\mathbb Z}^+: n_1> 3\log_p{(k-1)}+\log_p{a}\ \ \ {\rm such\ that}
\  \ \ v_p(s(ap^{n_1},ap^{n_1}-(k-1)))< n_1.
\end{align}
From condition (\ref{3.13}), one can easily get that $k<p^{n_1}$,
which infers that $v_p(k)<n_1$.

Let $n$ be an integer with $n\ge n_1$. Since $k-1\ge 2$ is even and
condition (\ref{3.13}) holds for $k-1$, it follows from Theorem \ref{thm1.4'} that
\begin{align*}
v_p(s(ap^{n}, ap^{n}-(k-1)))
=v_p(s(ap^{n_1}, ap^{n_1}-(k-1)))+n-n_1
< n
\end{align*}
and
\begin{align}\label{3.14}
v_p(s(ap^{n+1}, ap^{n+1}-(k-1)))
&=v_p(s(ap^n, ap^n-(k-1)))+1\\
&=v_p(s(ap^{n_1}, ap^{n_1}-(k-1)))+n+1-n_1\notag\\
&< n+1\notag.
\end{align}
Note that $k$ is odd and $v_p(k) < n_1\le n$. Since
$v_p(s(ap^{n+1}, ap^{n+1}-(k-1))) < n+1$ and
$v_p(s(ap^{n}, ap^{n}-(k-1)))<n$, Theorem \ref{thm2}
together with (\ref{3.14}) give us that
\begin{align*}
v_p(s(ap^{n+1}, ap^{n+1}-k))
&=v_p(s(ap^{n+1}, ap^{n+1}-k+1))+v_p(ap^{n+1}-k)+n+1\\
&=v_p(s(ap^{n}, ap^{n}-k+1))+v_p(k)+n+2\\
&=v_p(s(ap^{n}, ap^{n}-k+1))+v_p(ap^n-k)+n+2\\
&=v_p(s(ap^{n}, ap^{n}-k))+2
\end{align*}
as Theorem \ref{thm1.5} expected. Hence
\begin{align*}
v_p(s(ap^{n}, ap^{n}-k))=v_p(s(ap^{n_1}, ap^{n_1}))+2(n-n_1).
\end{align*}

This concludes the proof of Theorem \ref{thm1.5}.  \qed

\section{Proof of Theorem \ref{thm1}}

In this section, we use the lemmas given in section 2 and Theorem \ref{thm2}
to supply the proof of Theorem \ref{thm1}.\\

\noindent
\textbf{Proof of Theorem \ref{thm1}.}
We prove Theorem \ref{thm1} by induction on the positive integer $a$ with $a\le p-1$.

First of all, let $a=1$. Then $2\le k\le p-2$. It infers that
$k\not\equiv \epsilon_k \pmod {p-1}$, $\langle k\rangle=k$ and $v_p(k)=0$.
Hence only part (ii) happens. So we need only to show part (ii), i.e., to show
that
$$v_p(s(p,p-k))\ge (v_p(k)+1)\epsilon_k+1=\epsilon_k+1$$
with the equality holding if and only if
$v_p\big(B_{2\lfloor\frac{k}{2}\rfloor}\big)=0$.
But replacing $k$ by $p-k$ in Corollary \ref{cor2.10} tells that
for any integer $k$ with $2\le k\le p-2$, one has
$$v_p(s(p,p-k))\ge \epsilon_{p-k-1}+1=\epsilon_k+1$$
with the equality being true if and only if
$v_p\big(B_{p-1-2\lfloor\frac{p-k}{2}\rfloor}\big)=0$.
Since
\begin{align*}
p-1-2\Big\lfloor\frac{p-k}{2}\Big\rfloor
&=p-1-2\Big\lfloor\frac{p-1}{2}-\frac{k-1}{2}\Big\rfloor\\
&=p-1-2\Big(\frac{p-1}{2}+\Big\lfloor-\frac{k-1}{2}\Big\rfloor\Big)\\
&=-2\Big\lfloor-\frac{k-1}{2}\Big\rfloor=2\Big\lfloor\frac{k}{2}\Big\rfloor,
\end{align*}
it then follows that $v_p\big(B_{p-1-2\lfloor\frac{p-k}{2}\rfloor}\big)=0$
holds if and only if $v_p\big(B_{2\lfloor\frac{k}{2}\rfloor}\big)=0$.
Therefore Theorem \ref{thm1} is true when $a=1$. Now let
$2\le a\le p-1$. Assume that Theorem \ref{thm1} holds for the $a-1$ case.
In what follows, we show Theorem \ref{thm1} is true for the $a$ case.

We begin with the proof of part (iii).

{\bf(iii).} Let $a$ and $k$ be integers with $a\ge 4$ and $a(p-1)+2\le k\leq ap-2$.
By setting $t=ap-k$, one finds that showing the truth of part {\rm(iii)} is
equivalent to showing that
\begin{align}
v_p(s(ap,t))\ge a-t\label{4.1}
\end{align}
holds for any integer $t$ such that $2\le t\le a-2$.
We prove (\ref{4.1}) by induction on the integer $a\ge 4$.
First, let $a=4$. Then $t=2$, and so
\begin{align}\label{4.2}
s(ap,t)=s(4p,2)=(4p-1)!\sum_{i=1}^{4p-1}\frac{1}{i}=(4p-1)!H_{4p-1}.
\end{align}
Since $p\ge 5$, using Lemma \ref{lem6'}, one can easily
deduce that $v_p(H_{4p-1})\ge -1$. Then by $v_p((4p-1)!)=3$
together with (\ref{4.2}), we derive that $v_p(s(4p,2))\ge 2$
as (\ref{4.1}) expected. So (\ref{4.1}) is proved when $a=4$.

In what follows, we let $5\le a\le p-1$. Then $p>5$.
Assume that (\ref{4.1}) holds for the $a-1$ case.
Now we show that (\ref{4.1}) is true for the $a$ case.
From Lemma \ref{lem2}, we get that
\begin{align}
s(ap,t)=\sum_{i=1}^{t}s(p,i)s_p((a-1)p,t-i).\label{4.3}
\end{align}
For any integer $i$ with $1\le i\le t$, it follows from Lemma \ref{lem3} that
\begin{align}
s_p((a-1)p,t-i)=\sum_{j=t-i}^{(a-1)p}s((a-1)p,j)\binom{j}{j-t+i}p^{j-t+i}.\label{4.4}
\end{align}
Let $j$ be any integer such that $0\le j\le (a-1)p$. If we can show that
\begin{align}
v_p(s((a-1)p,j))\ge a-1-j, \label{4.5}
\end{align}
then from (\ref{4.4}) and (\ref{4.5}), one can derive that $v_p(s_p((a-1)p,t-i))\ge a-t$ since $i\ge 1$.
Using (\ref{4.3}), one then deduces the required inequality (\ref{4.1}).
So to finish the proof of part (iii), it remains to show the truth of (\ref{4.5}).
This will be done in what follows.

If $j=0$, then (\ref{4.5}) is obviously true since $s((a-1)p,0)=0$.
If $j=1$, then one has
$$
v_p(s((a-1)p,j))=v_p(s((a-1)p,1))=v_p(((a-1)p-1)!)=a-2=a-1-j.
$$
So (\ref{4.5}) holds when $j=1$. If $2\le j\le a-3$,
then it follows from the induction assumption
of (\ref{4.1}) that (\ref{4.5}) is true.
If $j=a-2$, then $(a-1)p+j$ is odd and so we can
deduce from Theorem \ref{thm2} that
$v_p(s((a-1)p,j))\ge 1=a-1-j$ .
If $j\ge a-1$, then (\ref{4.5}) is clearly true
since $v_p(s((a-1)p,j))\ge 0$. So (\ref{4.5}) holds
for all integers $j$ with $0\le j\le (a-1)p$.
Hence part {\rm(iii)} is proved.

Now we turn our attention to the proofs of parts (i) and (ii).
Let $k$ be an integer such that $2\le k\le a(p-1)+1$. Assume that
parts (i) and (ii) hold for all even integers $k$ with $2\le k\le a(p-1)$.
In the following, we show that parts (i) and (ii) are true for all odd
integers $k$ with $3\le k\le a(p-1)+1$. To do so, let $k$ be an odd
integer. Then $k\not\equiv 0\pmod {p-1}$ and $k-1$ is even with $2\le k-1\le a(p-1)$.

If $k\equiv 1 \pmod {p-1}$, then $k-1\equiv 0 \pmod {p-1}$.
Hence the truth of part (i) for the case of even number
gives us that $v_p(s(ap,ap-(k-1)))=0$.
Thus it follows from Theorem \ref{thm2} that
$$
v_p(s(ap,ap-k))=v_p(s(ap,ap-k+1))+v_p(k)+1=v_p(k)+1
$$
as expected. So part (i) is true for any odd integer $k$
with $k\equiv 1 \pmod {p-1}$.

If $k\not\equiv 1 \pmod {p-1}$, then $k-1\not\equiv 0 \pmod {p-1}$.
Hence the truth of part (ii) for the case of even number
tells us that $v_p(s(ap,ap-(k-1)))\ge 1$
with the equality holding if and only if $v_p(B_{\langle k-1\rangle})=0$.
Since $k$ is odd and one may write $k=\langle k\rangle+l(p-1)$ for some
nonnegative integer $l$, we can deduce that
$$
\langle k-1\rangle=\Big\langle 2\Big\lfloor\frac{k}{2}\Big\rfloor\Big\rangle
=\Big\langle 2\Big\lfloor\frac{\langle k\rangle+l(p-1)}{2}\Big\rfloor\Big\rangle
=\Big\langle 2\Big\lfloor\frac{\langle k\rangle}{2}\Big\rfloor+l(p-1)\Big\rangle
=\Big\langle 2\Big\lfloor\frac{\langle k\rangle}{2}\Big\rfloor\Big\rangle
=2\Big\lfloor\frac{\langle k\rangle}{2}\Big\rfloor.
$$
Thus $v_p(B_{\langle k-1\rangle})=0$ holds if and only if
$v_p\big(B_{2\big\lfloor\frac{\langle k\rangle}{2}\big\rfloor}\big)=0$.
From this together with Theorem \ref{thm2}, one then deduces that
$$
v_p(s(ap,ap-k))\ge v_p(k)+2
$$
with the equality being true if and only if
$v_p\big(B_{2\big\lfloor\frac{\langle k\rangle}{2}\big\rfloor}\big)=0$.
Hence part (ii) holds for any odd integer $k$ with $k\not\equiv 1 \pmod {p-1}$.

So to finish the proof of Theorem \ref{thm1}, it remains to show that parts
(i) and (ii) are true for all even integers $k$ with $2 \leq k \leq a(p-1)$.
This will be done in what follows.

In the remaining part of the proof, we always let $k$ be even
and $2 \leq k \leq a(p-1)$. Since $s(p,i)=0$ if
$i\ge p+1$ and $s_p((a-1)p,ap-k-i)=0$ if $i\le p-k-1$,
replacing $m$ by $p$, $n$ by $(a-1)p$ and $k$ by $ap-k$
in Lemma \ref{lem2} gives us that
\begin{align}
s(ap,ap-k)&=\sum_{i=1}^{ap-k}s(p,i)s_p((a-1)p,ap-k-i)\notag\\
&=\sum_{i=\max\{1,p-k\}}^{\min\{p,ap-k\}}s(p,i)s_p((a-1)p,ap-k-i).\label{4.6}
\end{align}
Now let $i$ be an integer such that $\max\{1,p-k\}\le i\le \min\{p,ap-k\}$.
Then $0\le ap-k-i\le (a-1)p$. By Corollary \ref{cor2.10}, we know that
$v_p(s(p,i))\ge 1$ if $2\le i\le p-1$. Moreover, write $ap-k-i=(a-1)p-(k+i-p)$,
where $0\le k+i-p\le \min\{k,(a-1)p\}$. From the inductive hypothesis of parts
(i) and (ii) together with the truth of part (iii) and Lemma \ref{lem7}, one can
deduce that
$$
v_p(s_p((a-1)p,ap-k-i))=v_p(s_p((a-1)p,(a-1)p-(k+i-p)))\ge 0
$$
with the equality holding if and only if $k+i-p\equiv 0 \pmod {p-1}$.
Define
$$
V_k:=\{i\in\mathbb{Z}: 2\le i\le p-1\ {\rm and} \ k+i-p\not\equiv 0\pmod {p-1}\}.
$$
Then $v_p(s(p,i))\ge 1$ and $v_p(s_p((a-1)p,ap-k-i))\ge 1$
if $i\in V_k$. This infers that for any $i\in V_k$, we have
\begin{align}
s(p,i)s_p((a-1)p,ap-k-i) \equiv 0 \pmod {p^2}.  \label{4.7}
\end{align}

Now we begin to prove part (i) for the case of even number $k$.

{\bf(i).} Let $k$ be an even integer such that
$2\le k\le a(p-1)$ and $k\equiv 0 \pmod {p-1}$.
Then one can write $k=l(p-1)$ for an integer $l$ with $1\le l\le a$.
We claim that if $k=l(p-1)$ for any integer $l$ with $1\le l\le a$, then
\begin{align}
s(ap,ap-k)\equiv \binom{a}{l}s(p, 1)^l \pmod{p^2}.\label{4.8}
\end{align}
Since $s(p, 1)=(p-1)!$, one has $v_p(s(p, 1))=v_p((p-1)!)=0$.
But $1\le l\le a\le p-1$, then from claim (\ref{4.8}), it follows that
$v_p(s(ap,ap-k))=0$ when $2\le k\le a(p-1)$ and $k\equiv 0 \pmod {p-1}$,
which arrives at the statement of part (i). Now we prove claim (\ref{4.8})
by induction on the integer $a$ with $2\le a \le p-1$.

First, let $a=2$. Then $l=1$ or $l=2$.
If $l=1$, then $k=p-1$. So $p-k=1$ and $2p-k=p+1$.
By (\ref{4.6}) and $s(p,p)=1$, we obtain that
\begin{align}\label{4.9}
s(2p,2p-(p-1))&=\sum_{i=1}^{p}s(p,i)s_p(p,p+1-i)\notag\\
&=s(p,1)+s_p(p,1)+\sum_{i=2}^{p-1}s(p,i)s_p(p,p+1-i).
\end{align}
For any integer $i$ with $2\le i\le p-1$, one has
$k+i-p=i-1\not\equiv 0 \pmod{p-1}$, and so $i\in V_{p-1}$.
Then by (\ref{4.7}), we derive that
\begin{align}\label{4.10}
\sum_{i=2}^{p-1}s(p,i)s_p(p,p+1-i)\equiv 0 \pmod {p^2}.
\end{align}
Using Lemma \ref{lem7}, we get that $s_p(p,1)\equiv s(p,1)\pmod{p^2}$.
Hence it follows from (\ref{4.9}) and (\ref{4.10}) that
$$
s(2p,2p-(p-1))\equiv 2s(p,1)=\binom{2}{1}s(p,1)\pmod{p^2}
$$
as (\ref{4.8}) claimed.
If $l=2$, then $k=2(p-1)$ and $2p-k=2$. Likewise, since
$s_p(p,1)\equiv s(p,1)\pmod{p^2}$ and $v_p(s(p,2))\ge 1$
and $v_p(s_p(p,0))=1$, one deduces from (\ref{4.6})
together with Lemmas \ref{lem5} and \ref{lem7} that
\begin{align*}
s(2p,2p-2(p-1))&=s(2p,2)=s(p,1)s_p(p,1)+s(p,2)s_p(p,0)\\
&\equiv s(p,1)^2=\binom{2}{2}s(p,1)^2\pmod{p^2}.
\end{align*}
Thus claim (\ref{4.8}) is true when $a=2$.

Now let $3\le a\le p-1$. Assume that claim (\ref{4.8}) holds for the
$a-1$ case. In what follows, we prove that claim (\ref{4.8}) is true
for the $a$ case. We divide this into the following three cases.

{\bf Case 1.} $l=1$. Then $k=p-1$. So $p-k=1$ and $ap-k-1=(a-1)p$.
From (\ref{4.6}) and $s_p((a-1)p,(a-1)p)=s(p,p)=1$, one derives that
\begin{align}\label{4.11}
s(ap,ap-k)=&\sum_{i=1}^{p}s(p,i)s_p((a-1)p,ap-k-i)\notag\\
=&s(p,1)+s_p((a-1)p,(a-1)p-k)+\sum_{i=2}^{p-1}s(p,i)s_p((a-1)p,ap-k-i).
\end{align}
Since $k=p-1$, by Lemma \ref{lem7} and the inductive
hypothesis of claim (\ref{4.8}), we get that
\begin{align}\label{4.12}
s_p((a-1)p,(a-1)p-k)&=s_p((a-1)p,(a-1)p-(p-1))\notag\\
&\equiv s((a-1)p,(a-1)p-(p-1))\notag\\
&\equiv \binom{a-1}{1}s(p,1)\pmod{p^2}.
\end{align}

For any integer $i$ with $2\le i\le p-1$, one has $1\leq k+i-p=i-1\leq p-2$
and so $k+i-p\not\equiv 0 \pmod {p-1}$, which implies that
$i\in V_{p-1}$. It then follows from (\ref{4.7}) that
\begin{align}\label{4.13}
\sum_{i=2}^{p-1}s(p,i)s_p((a-1)p,ap-k-i)\equiv 0 \pmod {p^2}.
\end{align}
Therefore by (\ref{4.11}) to (\ref{4.13}), we arrive at
\begin{align*}
s(ap, ap-k)\equiv s(p,1)+\binom{a-1}{1}s(p,1) = \binom{a}{1}s(p,1)\pmod{p^2}
\end{align*}
as (\ref{4.8}) asserted. Thus claim (\ref{4.8}) is proved
when $l=1$.

{\bf Case 2.} $2\leq l\leq a-1$. Then $2(p-1)\le k=l(p-1)\le (a-1)(p-1)$.
So $p-k<0$ and $p+1\leq ap-k \leq(a-1)p-1$.
From (\ref{4.6}) and $s(p,p)=1$, one derives that
\begin{align}\label{4.14}
s(ap,ap-k)
=&\sum_{i=1}^{p}s(p,i)s_p((a-1)p,ap-k-i)\notag\\
=&s(p,1)s_p((a-1)p,ap-k-1)+s_p((a-1)p,(a-1)p-k)\notag\\
&+\sum_{i=2}^{p-1}s(p,i)s_p((a-1)p,ap-k-i).
\end{align}

Since $k=l(p-1)$ is even and $2\le l\le a-1$, by using Lemma \ref{lem7}
together with the inductive hypothesis of claim (\ref{4.8}),
we deduce that
\begin{align}\label{4.15}
s_p((a-1)p,ap-k-1)
&=s_p((a-1)p,(a-1)p-(k-(p-1)))\notag\\
&\equiv s((a-1)p,(a-1)p-(k-(p-1)))\notag\\
&\equiv s((a-1)p,(a-1)p-(l-1)(p-1))\notag\\
&\equiv  \binom{a-1}{l-1}s(p,1)^{l-1}\pmod{p^2}
\end{align}
and
\begin{align}\label{4.16}
s_p((a-1)p,(a-1)p-k)&\equiv s((a-1)p,(a-1)p-k)\notag\\
&\equiv s((a-1)p,(a-1)p-l(p-1))\notag\\
&\equiv \binom{a-1}{l}s(p,1)^{l}\pmod{p^2}.
\end{align}

For any integer $i$ with $2\le i\le p-1$, one has
$(l-1)(p-1)+1\leq k+i-p\leq l(p-1)-1$
and so $k+i-p\not\equiv 0 \pmod {p-1}$. It infers that
$i\in V_k$. So from (\ref{4.7}), one obtains that
\begin{align} \label{4.17}
\sum_{i=2}^{p-1}s(p,i)s_p((a-1)p,ap-k-i)\equiv 0 \pmod {p^2}.
\end{align}
Thus by (\ref{4.14}) to (\ref{4.17}), we derive that
\begin{align*}
s(ap,ap-k)\equiv s(p,1)\binom{a-1}{l-1}s(p,1)^{l-1}+
\binom{a-1}{l}s(p,1)^l \equiv \binom{a}{l}s(p,1)^l\pmod{p^2}
\end{align*}
as (\ref{4.8}) asserted. Thus claim (\ref{4.8}) is proved
when $2\leq l\leq a-1$.

{\bf Case 3.} $l=a$. Then $k=a(p-1)$, $p-k< 0$ and
$3\leq ap-k=a\leq p-1$. So by (\ref{4.6}), one deduces that
\begin{align}\label{4.18}
s(ap,ap-k)&=s(ap,a)=\sum_{i=1}^as(p,i)s_p((a-1)p,a-i)\notag\\
&=s(p,1)s_p((a-1)p,a-1)+\sum_{i=2}^as(p,i)s_p((a-1)p,a-i).
\end{align}
Using Lemma \ref{lem7} and the inductive hypothesis
of claim (\ref{4.8}), we obtain that
\begin{align}\label{4.19}
s_p((a-1)p,a-1)&=s_p((a-1)p,(a-1)p-(a-1)(p-1))\notag\\
&\equiv s((a-1)p,(a-1)p-(a-1)(p-1))\notag\\
&\equiv  s(p,1)^{a-1} \pmod{p^2}.
\end{align}
For any integer $i$ with $2\leq i\leq a\le p-1$, one has
$k+i-p=a(p-1)-(p-1)+i-1\equiv i-1\not\equiv 0\pmod {p-1}$,
and so $i\in V_k$. Hence by (\ref{4.7}), one gets that
\begin{align} \label{4.20}
\sum_{i=2}^{a}s(p,i)s_p((a-1)p,ap-k-i)\equiv 0 \pmod {p^2}.
\end{align}
It then follows from (\ref{4.18}) to (\ref{4.20}) that
\begin{align*}
s(ap,ap-k)=s(ap,a)\equiv s(p,1)s_p((a-1)p,a-1)\equiv s(p,1)^a\pmod{p^2}.
\end{align*}
Namely, claim (\ref{4.8}) is true when $l=a$.
This completes the proof of part (i).

Finally, we prove part (ii) for the case of even number $k$.

{\bf(ii).} Let $k$ be an even integer such that $2\le k\le a(p-1)$ and
$k\not\equiv 0\pmod{p-1}$. Then there exists a unique integer $l$ with
$1\le l\le a$ such that $(l-1)(p-1)+2\leq k\leq l(p-1)-2$. Thus
$\langle k\rangle=k-(l-1)(p-1)$.
We claim that if $(l-1)(p-1)+2\leq k\leq l(p-1)-2$ for
any integer $l$ with $1\le l\le a$, then
\begin{align}\label{4.21}
s(ap,ap-k)\equiv a\binom{a-1}{l-1}s(p,1)^{l-1}
s(p,p-\langle k\rangle)\pmod{p^2}.
\end{align}
Since $3\le p-\langle k\rangle\le p-2$ and $\langle k\rangle$ is even,
by Corollary \ref{cor2.10}, one gets that $v_p(s(p,p-\langle k\rangle))\ge 1$
with the equality holding if and only if $v_p(B_{\langle k\rangle})=0$.
Thus for any even integer $k$ with $2\le k\le a(p-1)$, it
follows from claim (\ref{4.21}) that $v_p(s(ap,ap-k))\ge 1$
if $k\not\equiv0 \pmod {p-1}$ with the equality being true
if and only if $v_p(B_{\langle k\rangle})=0$. So to finish the
proof of part (ii), it remains to show the truth of claim (\ref{4.21}).
We proceed this with induction on the integer $a$ with $2\le a \le p-1$.

For the case $a=2$, one has $l=1$ or $l=2$. If $l=1$, then
$2\leq k\leq p-3$. So $\langle k\rangle=k$, $3\le p-k\le p-2$
and $p+3\leq 2p-k\leq 2p-2$. Since $s_p(p,p)=s(p,p)=1$,
it follows from (\ref{4.6}) that
\begin{align}\label{4.22}
s(2p,2p-k)&=\sum_{i=p-k}^{p}s(p,i) s_p(p,2p-k-i)\notag\\
&=s(p,p-k)+s_p(p,p-k)+\sum_{i=p-k+1}^{p-1}s(p,i) s_p(p,2p-k-i).
\end{align}
For any integer $i$ with $4\le p-k+1\le i\le p-1$,
we have $1\le k+i-p\le p-4$. It implies that $i\in V_k$.
Hence (\ref{4.7}) tells us that $s(p,i) s_p(p,2p-k-i)\equiv 0\pmod {p^2}$.
So it follows that
\begin{align}\label{4.23}
\sum_{i=p-k+1}^{p-1}s(p,i) s_p(p,2p-k-i)\equiv 0 \pmod {p^2}.
\end{align}
Since $k$ is even, by Lemma \ref{lem7}, we know that
$s_p(p,p-k)\equiv s(p,p-k)\pmod{p^2}$. Thus (\ref{4.22})
and (\ref{4.23}) give us that
\begin{align*}
s(2p,2p-k)\equiv 2s(p,p-k)\equiv 2\binom{1}{0}s(p,p-\langle k\rangle)\pmod{p^2}
\end{align*}
as (\ref{4.21}) claimed. Hence claim (\ref{4.21}) is true when $a=2$ and $l=1$.

If $l=2$, then $p+1\leq k\leq 2(p-1)-2$.
So one has $\langle k\rangle=k-(p-1)$, $p-k< 0$ and
$4\leq 2p-k\leq p-1$. Thus by (\ref{4.6}), we get that
\begin{align}\label{4.24}
s(2p,2p-k)=&\sum_{i=1}^{2p-k}s(p,i) s_p(p,2p-k-i)\notag\\
=&s(p,1) s_p(p,2p-k-1)+\sum_{i=2}^{2p-k-2}s(p,i) s_p(p,2p-k-i)\notag\\
&+s(p,2p-k-1) s_p(p,1)+s(p,2p-k) s_p(p,0).
\end{align}
Note that $2p-k-1=p-(k-(p-1))=p-\langle k\rangle$ and
$\langle k\rangle$ is even.  So by Lemma \ref{lem7},
one deduces that $s_p(p,1)\equiv s(p,1)\pmod{p^2}$ and
\begin{align}\label{4.25}
s_p(p,2p-k-1)=s_p(p,p-\langle k\rangle)\equiv
s(p,p-\langle k\rangle) \pmod{p^2}.
\end{align}
If $2\le i\le 2p-k-2\le p-3$, then $3\le k+i-p\le p-2$.
It infers that $i\in V_k$. So from (\ref{4.7}),
one obtains that
\begin{align} \label{4.26}
\sum_{i=2}^{2p-k-2}s(p,i) s_p(p,2p-k-i)\equiv 0 \pmod {p^2}.
\end{align}
Since $4\le 2p-k\le p-1$, one has $v_p(s(p,2p-k))\ge 1$.
It then follows from the fact $v_p(s_p(p,0))=1$ that
$s(p,2p-k)s_p(p,0)\equiv 0\pmod{p^2}$. Thus by (\ref{4.24})
to (\ref{4.26}), we arrive at
\begin{align*}
s(2p,2p-k)\equiv &2s(p,1)s(p,p-(k-(p-1)))\\
\equiv& 2\binom{1}{1}s(p,1)s(p,p-\langle k\rangle)\pmod{p^2}.
\end{align*}
This completes the proof of claim (\ref{4.21}) when $a=2$.

In what follows, we let $3\le a\le p-1$. Assume that claim
(\ref{4.21}) holds for the $a-1$ case. To show that
claim (\ref{4.21}) is true for the $a$ case, we consider
the following three cases.

{\bf Case 1.} $l=1$. Then $2\leq k\leq p-3$. So $\langle k\rangle=k$,
$p-k\ge 3$ and $(a-1)p+3\leq ap-k\leq ap-2$. Since
$s_p((a-1)p,(a-1)p)=s(p,p)=1$, by (\ref{4.6}) we get that
\begin{align}\label{4.27}
&s(ap,ap-k)\notag\\
= &\sum_{i=p-k}^ps(p,i)s_p((a-1)p,ap-k-i)\notag\\
= &s(p,p-k)+s_p((a-1)p,(a-1)p-k)
+\sum_{i=p-k+1}^{p-1}s(p,i)s_p((a-1)p,ap-k-i).
\end{align}
Note that $k$ is even and $2\le k\le p-3\le (a-1)p-1$. So by using Lemma \ref{lem7}
and the inductive hypothesis of claim (\ref{4.21}), one derives that
\begin{align}\label{4.28}
s_p((a-1)p,(a-1)p-k)\equiv s((a-1)p,(a-1)p-k)
\equiv (a-1)s(p,p-k)\pmod{p^2}.
\end{align}
For any integer $i$ with $4\le p-k+1\leq i\leq p-1$, we have
$1\le k+i-p\le p-4$, and so $i\in V_k$. By (\ref{4.7}),
one gets that
\begin{align}\label{4.29}
\sum_{i=p-k+1}^{p-1}s(p,i) s_p(p,2p-k-i)\equiv 0 \pmod {p^2}.
\end{align}
Hence it follows from (\ref{4.27}) to (\ref{4.29}) that
\begin{align*}
s(ap,ap-k)&\equiv s(p,p-k)+(a-1)s(p,p-k)\\
&\equiv as(p,p-k)\equiv a\binom{a-1}{0}s(p,p-\langle k\rangle)\pmod{p^2}
\end{align*}
as (\ref{4.21}) claimed. Hence claim (\ref{4.21}) is proved when $l=1$.

{\bf Case 2.} $2\leq l\leq a-1$. Then $(l-1)(p-1)+2\le k\le l(p-1)-2$.
Hence one has $\langle k\rangle=k-(l-1)(p-1)$, $p-k< 0$ and
$p+4\leq ap-k\leq ap-3$. From (\ref{4.6}) and $s(p,p)=1$ we know that
\begin{align} \label{4.30}
s(ap,ap-k)=&\sum_{i=1}^{p}s(p,i)s_p((a-1)p,ap-k-i)\notag\\
=&s(p,1)s_p((a-1)p,ap-k-1)+s_p((a-1)p,(a-1)p-k)\notag\\
&+\sum_{i=2}^{p-1}s(p,i)s_p((a-1)p,ap-k-i).
\end{align}

Notice that $ap-k-1=(a-1)p-(k-(p-1))$ and $(l-1)(p-1)+2\le k\le l(p-1)-2$
with $2\le l\le a-1$. So $2\le (l-2)(p-1)+2\le k-(p-1)\le (l-1)(p-1)-2$
and $\langle k-(p-1)\rangle=\langle k\rangle$.  Thus using Lemma \ref{lem7}
and the induction assumption of claim (\ref{4.21}), we obtain that
\begin{align}\label{4.31}
s_p((a-1)p,ap-k-1)=&s_p((a-1)p,(a-1)p-(k-(p-1)))\notag\\
\equiv &s((a-1)p,(a-1)p-(k-(p-1)))\notag\\
\equiv &(a-1)\binom{a-2}{l-2}s(p,1)^{l-2}
s(p,p-\langle k\rangle)\pmod{p^2}
\end{align}
and
\begin{align}\label{4.32}
s_p((a-1)p,(a-1)p-k)\equiv & s((a-1)p,(a-1)p-k)\notag\\
\equiv & (a-1)\binom{a-2}{l-1}s(p,1)^{l-1}
s(p,p-\langle k\rangle)\pmod{p^2}.
\end{align}

For any integer $i$ with $2\leq i\leq p-1$, one has
$(l-2)(p-1)+3\leq k+i-p\leq l(p-1)-3$.
If $k+i-p\equiv 0 \pmod {p-1}$, then $k+i-p=(l-1)(p-1)$.
Thus $i=p-(k-(l-1)(p-1))=p-\langle k\rangle$ and
$ap-k-i=(a-1)p-(l-1)(p-1)$. If $k+i-p\not\equiv 0 \pmod {p-1}$,
then $i\ne p-\langle k\rangle$ and $i\in V_k$.
It implies that $s(p,i)s_p((a-1)p,ap-k-i) \equiv 0\pmod{p^2}$
when $2\le i\le p-1$ and $i\ne p-\langle k\rangle$.
Hence one gets that
\begin{align}\label{4.33}
&\sum_{i=2}^{p-1}s(p,i)s_p((a-1)p,ap-k-i)\notag\\
\equiv &s(p,p-\langle k\rangle)s_p((a-1)p,(a-1)p-(l-1)(p-1)) \pmod {p^2}.
\end{align}
Since $2\le l\le a-1$ and $(l-1)(p-1)$ is even, it follows from the
truth of Lemma \ref{lem7} and claim (\ref{4.8}) that
\begin{align}\label{4.34}
&s_p((a-1)p,(a-1)p-(l-1)(p-1))\notag\\
\equiv &s((a-1)p,(a-1)p-(l-1)(p-1))\equiv
\binom{a-1}{l-1}s(p,1)^{l-1}\pmod{p^2}.
\end{align}
Therefore by (\ref{4.30}) to (\ref{4.34}), we conclude that
\begin{align*}
&s(ap,ap-k)\\
\equiv &\Big((a-1)\binom{a-2}{l-2}+(a-1)\binom{a-2}{l-1}
+\binom{a-1}{l-1}\Big)
s(p,1)^{l-1}s(p,p-\langle k\rangle)\\
\equiv&a\binom{a-1}{l-1}s(p,1)^{l-1}s(p,p-\langle k\rangle)\pmod{p^2}
\end{align*}
as (\ref{4.21}) asserted. Thus claim (\ref{4.21}) is proved
when $2\le l\le a-1$.

{\bf Case 3.} $l=a$. Then $(a-1)(p-1)+2\leq k\leq a(p-1)-2$.
So $\langle k\rangle=k-(a-1)(p-1)$, $p-k<0$ and
$5\le a+2\leq ap-k\leq p+a-3\leq 2p-4$. From (\ref{4.6}), one gets that
\begin{align}\label{4.35}
&s(ap,ap-k)\notag\\
=&\sum_{i=1}^{\min\{p,ap-k\}}s(p,i)s_p((a-1)p,ap-k-i)\notag\\
=&s(p,1)s_p((a-1)p,ap-k-1)+\sum_{i\in W_k}s(p,i)s_p((a-1)p,ap-k-i),
\end{align}
where $W_k:=\{i\in \mathbb{Z}: 2\le i\le \min\{p,ap-k\}\}$.

Since $ap-k-1=(a-1)p-(k-(p-1))$ and $(a-2)(p-1)+2\le
k-(p-1)\le (a-1)(p-1)-2$, it follows from Lemma \ref{lem7}
and the inductive hypothesis of claim (\ref{4.21}) that
\begin{align}\label{4.36}
s_p((a-1)p,ap-k-1)&=s_p((a-1)p,(a-1)p-(k-(p-1)))\notag\\
&\equiv s((a-1)p,(a-1)p-(k-(p-1)))\notag\\
&\equiv (a-1)s(p,1)^{a-2}s(p,p-\langle k\rangle)\pmod{p^2}.
\end{align}

Since $2\le \langle k\rangle=k-(a-1)(p-1)\le p-3$ and $a\ge 3$, one has
$2\le p-\langle k\rangle\le p-3$ and $p-\langle k\rangle\le ap-k-2$.
It implies that $p-\langle k\rangle \in W_k$.
For the integer $i$ with $i=p-\langle k\rangle$, we have
$ap-k-i=ap-k-(p-\langle k\rangle)=(a-1)p-(a-1)(p-1)$.
So using Lemma \ref{lem7} and claim (\ref{4.8}), one derives that
\begin{align}\label{4.37}
s_p((a-1)p,ap-k-(p-\langle k\rangle))&=s_p((a-1)p,(a-1)p-(a-1)(p-1))\notag\\
&\equiv s((a-1)p,(a-1)p-(a-1)(p-1))\notag\\
&\equiv s(p,1)^{a-1}\pmod{p^2}.
\end{align}

If $a+2\leq ap-k\leq p-1$, then $W_k=\{i\in \mathbb{Z}: 2\le i\le ap-k\}$.
For any integer $i$ with $i\in W_k$, one has
$(a-2)(p-1)+3\leq k+i-p\leq a(p-1)-3$. So if $k+i-p\equiv 0 \pmod {p-1}$,
then we must have $k+i-p=(a-1)(p-1)$, i.e.,
$i=p-(k-(a-1)(p-1))=p-\langle k\rangle$.
If $k+i-p\not\equiv 0 \pmod {p-1}$, then
$i\ne p-\langle k\rangle$ and $i\in V_k$, which infers that
$s(p,i)s_p((a-1)p,ap-k-i)\equiv 0\pmod{p^2}$.
Hence one obtains that
\begin{align}\label{4.38}
\sum_{i\in W_k}s(p,i)s_p((a-1)p,ap-k-i)
\equiv &s(p,p-\langle k\rangle)s_p((a-1)p,ap-k-(p-\langle k\rangle))\notag\\
\equiv &s(p,1)^{a-1}s(p,p-\langle k\rangle) \pmod {p^2}.
\end{align}

If $p\leq ap-k\leq p+a-3$, then $W_k=\{i\in \mathbb{Z}: 2\le i\le p\}$
and $(a-1)(p-1)+2\leq k\leq (a-1)p$. Since $k$ is even, one has
$(a-1)(p-1)+2\le k\le (a-1)p-2$ or $k=(a-1)p$. So by the truth of
part (iii) and $s((a-1)p,0)=0$, we know that
\begin{align*}
v_p(s((a-1)p,(a-1)p-k))\ge a-1+k-(a-1)p\ge 2.
\end{align*}
It then follows from Lemma \ref{lem7} that
\begin{align}\label{4.39}
s_p((a-1)p,(a-1)p-k)\equiv s((a-1)p,(a-1)p-k)\equiv 0 \pmod{p^2}.
\end{align}
Likewise, for any integer $i$ with $i\in W_k$, one has
$k+i-p\equiv 0 \pmod {p-1}$ if and only if $i=p-\langle k\rangle$.
Let $U_k:=\{i\in W_k: i\ne p-\langle k\rangle
\ {\rm and}\ i\ne p\}$. Then $\emptyset \ne U_k\subseteq V_k$.
Hence by (\ref{4.7}) together with (\ref{4.37}) and (\ref{4.39}), one gets that
\begin{align}\label{4.40}
&\sum_{i\in W_k}s(p,i)s_p((a-1)p,ap-k-i)\notag\\
=&\sum_{i\in U_k}s(p,i)s_p((a-1)p,ap-k-i)+s(p,p)s_p((a-1)p,(a-1)p-k)\notag\\
&+s(p,p-\langle k\rangle)s_p((a-1)p,ap-k-(p-\langle k\rangle))\notag\\
\equiv &s(p,p-\langle k\rangle)s_p((a-1)p,ap-k-(p-\langle k\rangle))\notag\\
\equiv &s(p,1)^{a-1}s(p,p-\langle k\rangle) \pmod {p^2}.
\end{align}
Therefore (\ref{4.35}) and (\ref{4.36}) together with (\ref{4.38})
and (\ref{4.40}) give us that
\begin{align*}
s(ap,ap-k)&\equiv (a-1)s(p,1)^{a-1}s(p,p-\langle k\rangle)+
s(p,p-\langle k\rangle)s(p,1)^{a-1}\\
&\equiv as(p,1)^{a-1}s(p,p-\langle k\rangle)\notag\\
&\equiv a\binom{a-1}{a-1}s(p,1)^{a-1}s(p,p-\langle k\rangle) \pmod{p^2}
\end{align*}
as desired. The proof of claim (\ref{4.21}) is complete.
So part {\rm(ii)} is proved.

This finishes the proof of Theorem \ref{thm1}. \qed

\section{Concluding remarks}

In \cite{[QH]}, we gave a formula for $v_2(s(2^n, k))$ with $k$
being an integer such that $1\le k\le 2^n$. In \cite{[QFH]}, Qiu,
Feng and Hong presented a formula for $v_3(s(a3^n, k))$ with $k$
being an integer such that $1\le k\le a3^n$, where $a\in\{1, 2\}$.
In this paper, we arrive at an exact expression or a lower bound
of $v_p(s(ap, k))$ with $a$ and $k$ being
integers such that $1\le a\le p-1$ and $1\le k\le ap$. It is
natural to consider the $p$-adic valuation of the Stirling
number $s(ap^n, k)$, where $a, n$ and $k$ being
integers such that $1\le a\le p-1, n\ge 2$ and $1\le k\le ap^n$.
For any odd prime $p$ and any positive integer $k$,
recall that $\epsilon_k$ is defined by $\epsilon_k :=0$
if $k$ is even and $\epsilon_k :=1$ if $k$ is odd,
and $\langle k\rangle$ denotes the
integer such that $0\le \langle k\rangle\le p-2$
and $k\equiv \langle k\rangle \pmod {p-1}$. We propose the
following conjecture.

\begin{cnj}\label{cnj1}
Let $p$ be an odd prime. Let $a,n,m,k$ be positive integers such that
$1\le a\leq p-1$, $1\le m\le n$ and $2\le k\le ap^n-2$.
Then each of the following is true:

{\rm (i).} If $2\le k \le a(p-1)p^{m-1}+1<ap^m$, then
\begin{align*}
v_p(s(ap^n, ap^m-k))=\frac{a}{p-1}(p^n-p^m)-(n-m)(ap^m-k)
+m+(m+v_p(k))\epsilon_k+T_k,
\end{align*}
where
$$
T_k:=\Big\{\begin{array}{ll}
 -1- v_p(\lfloor\frac{k}{2}\rfloor),
 & {\it if}\ k\equiv \epsilon_k\pmod{p-1}; \\
v_p(B_{2\lfloor\frac{\langle k\rangle}{2}\rfloor}),
& {\it if}\ k\not\equiv \epsilon_k\pmod{p-1}.
\end{array}
$$

{\rm (ii).} If $a\ge 4$ and $a(p-1)+2\le k\le ap-2$, then
\begin{align*}
v_p(s(ap^n,ap-k))\ge\frac{a}{p-1}(p^n-p)-(n-1)(ap-k)+a+k-ap.
\end{align*}
\end{cnj}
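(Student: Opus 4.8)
The plan for a proof of Conjecture~\ref{cnj1} would be a nested induction that pushes the arguments behind Theorems~\ref{thm1}, \ref{thm2} and~\ref{thm1.5} ``one level deeper'': an outer induction on $n\ge m$, an inner induction on $m\ge1$, and within the bottom row a further induction on $a\le p-1$ and on the parity of $k$, with the seed $m=n=1$ furnished by Theorem~\ref{thm1}. A first step is to read off what the combinatorial terms in part~(i) mean arithmetically. Since $s(ap^n,ap^m-k)=(ap^n-1)!\,H(ap^n-1,ap^m-k-1)$ and $v_p((ap^j-1)!)=\frac{a(p^j-1)}{p-1}-j$ by the Legendre formula, the leading term $\frac{a}{p-1}(p^n-p^m)$ is exactly $v_p((ap^n-1)!)-v_p((ap^m-1)!)+(n-m)$, so that up to the offset $m-(n-m)(ap^m-k)$ it merely records the factorial valuation; the genuinely delicate quantity $(m+v_p(k))\epsilon_k+T_k$ is what has to be produced at the bottom row $ap^m$. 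The Bernoulli content $B_{2\lfloor\langle k\rangle/2\rfloor}$ inside $T_k$ tells us immediately that Lemma~\ref{lem5} (Washington's congruence) and Corollary~\ref{cor} must be invoked there, and the regular/irregular dichotomy will enter precisely through whether $v_p\big(B_{2\lfloor\langle k\rangle/2\rfloor}\big)=0$, exactly as in Theorem~\ref{thm1}(ii).

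The core of the induction is the lifting step $ap^{j}\to ap^{j+1}$. As in Section~4 and in the proof of Lemma~\ref{lem7}, one would use the $m$-th Stirling number device: grouping $(x)_{ap^{j+1}}$ into $ap$ blocks of length $p^{j}$ and invoking $s_{tp^{j}}(p^{j},i)\equiv s(p^{j},i)\pmod{p^{j}}$ from Lemma~\ref{lem3} (together with the finer decomposition of Lemma~\ref{lem2}) expresses $s(ap^{j+1},ap^m-k)$, modulo a suitable power of $p$, through the lower-order quantities $s(p^{j},\cdot)$, $s(ap^{j},\cdot)$ and the $s_{p^{j}}$'s already controlled by the inductive hypotheses and by Lemma~\ref{lem7}; one must then show that exactly one product survives modulo the next relevant power of $p$ and that it carries an extra factor of $p$-adic valuation $(ap^m-k)+1$, which is what accumulates the $-(n-m)(ap^m-k)$ and the $+m$. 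The odd-$k$ case would be reduced to the even-$(k-1)$ case by the Vandermonde identity of Lemma~\ref{lem1} (legitimate since $ap^{j+1}+(ap^m-k)$ is odd when $k$ is odd), exactly as in the proof of Theorem~\ref{thm2}: the dominant term $ap^{j+1}(ap^m-k)\,s(ap^{j+1},ap^m-k+1)$ supplies the summand $v_p(ap^m-k)+(j+1)$ and hence the $v_p(k)$ and $\epsilon_k$-contributions, and the isosceles triangle principle closes the estimate; one then has to rerun the ``$\Delta$-estimate'' of that proof, now with the row $ap^{j+1}$ and the column index near $ap^m$, using $v_p\big(\binom{ap^m-k+j'-1}{j'}\big)\ge v_p(ap^m-k)-v_p(j')$ together with $j'\ge v_p(j')+4$ for $j'\ge4,\ p\ge5$, to see that all the higher terms are harmless.

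Part~(ii) should be the most tractable piece, being the $n$-level analogue of Theorem~\ref{thm1}(iii). Setting $t=ap-k$ so that $2\le t\le a-2$, one would prove the asserted lower bound by the same induction on $a\ge4$ as in Section~4 (base case $a=4$, $t=2$, handled via $s(N,2)=(N-1)!\,H_{N-1}$ and Lemma~\ref{lem6'}, which give $v_p(H_{N-1})\ge-n$ for $N=4p^n$), now combined with the lifting step to account for the extra $\frac{a}{p-1}(p^n-p)-(n-1)(ap-k)$; since only an inequality is claimed, Washington's congruence is not needed here — the factorial valuations, the trivial bound $v_p(s(N',i))\ge0$, and Theorem~\ref{thm2} in the cases where the row plus the column index is odd suffice, exactly as for~\eqref{4.5}.

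I expect the main obstacle to be the bookkeeping in the lifting step. In Theorems~\ref{thm1}, \ref{thm2} and~\ref{thm1.5} each lift changes the valuation by a clean bounded amount because one works modulo a \emph{fixed} small power ($p^2$, or $p^{2n}$) and a single product dominates; but in the conjecture the target valuation grows like $\frac{a}{p-1}(p^n-p^m)$, so the identity from Lemma~\ref{lem2} has to be controlled modulo a power of $p$ that increases with $n$, and one must show that \emph{every} cross term, not merely the ones killed by a fixed power $p^2$, is strictly more $p$-divisible than the surviving one. The hardest subcase is $k\equiv\epsilon_k\pmod{p-1}$, where the bottom-row factor is essentially a $p$-adic unit divided by $\lfloor k/2\rfloor$ and $T_k=-1-v_p(\lfloor k/2\rfloor)$ is itself negative: this forces one to use the sharp mod-$p^3$ form of Lemma~\ref{lem5}(i) rather than its mod-$p^2$ form, and to track carefully how $v_p(\lfloor k/2\rfloor)$ is passed along by the Vandermonde reduction of Lemma~\ref{lem1} and by the successive lifts.
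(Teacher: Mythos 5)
There is a genuine gap here, and it starts with the status of the statement itself: Conjecture~\ref{cnj1} is exactly that in the paper --- an open conjecture in the concluding section, with no proof offered; the paper only records consistency checks (the $n=1$ case of part~(ii), the $n=1$ case of part~(i) when $k\equiv\epsilon_k\pmod{p-1}$ via Theorem~\ref{thm1}, and the case $p=3$ via the reference [QFH]). Your submission is likewise a strategy outline rather than a proof, and the step you yourself flag as ``the main obstacle'' is precisely the missing mathematics. The paper's entire toolkit operates modulo a fixed small power of $p$: Lemma~\ref{lem7} works modulo $p^{2n}$, and the key claims (\ref{4.8}) and (\ref{4.21}) in the proof of Theorem~\ref{thm1} are congruences modulo $p^2$, which suffice there only because the valuations being computed are bounded (at most $2+v_p(k)$). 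In Conjecture~\ref{cnj1} the target valuation grows like $\frac{a}{p-1}(p^n-p^m)$, so no congruence modulo a fixed or slowly growing power of $p$ can detect it; one would have to identify a single dominant term among the cross terms of the Lemma~\ref{lem2}/Lemma~\ref{lem3} expansion with an error controlled modulo a power of $p$ comparable to the answer, and neither your sketch nor the paper supplies any mechanism for that. Moreover, the dominant-term structure you want to import from Theorem~\ref{thm2} (the $\Delta$-estimate after Lemma~\ref{lem1}) is tied to columns of the form $ap^n-k$ with $k$ small relative to the row; for columns near $ap^m$ with $m<n$ the sum in Lemma~\ref{lem1} ranges over roughly $a(p^n-p^m)$ terms whose valuations are themselves unknown, so the isosceles-triangle argument does not transfer.

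A second concrete defect concerns the Bernoulli term. Even at the base level $n=m=1$, the exact formula in part~(i) with $T_k=v_p\big(B_{2\lfloor\langle k\rangle/2\rfloor}\big)$ is strictly stronger than Theorem~\ref{thm1}(ii), which only gives a lower bound together with a criterion for equality; when $v_p\big(B_{2\lfloor\langle k\rangle/2\rfloor}\big)\ge 1$ (irregular index situations) the conjectured value cannot be extracted from Washington's congruences, since Lemma~\ref{lem4} and Lemma~\ref{lem5} are only congruences modulo $p^2$ or $p^3$ and are blind to higher divisibility of the Bernoulli numbers. So your plan of ``invoking Lemma~\ref{lem5} and Corollary~\ref{cor} at the bottom row'' cannot, as it stands, produce the exact $T_k$; this is one reason the statement remains a conjecture, and any genuine proof would need an input beyond the paper's lemmas (for instance a sharper $p$-adic expansion of $H_{p-1}^{(r)}$ or a Newton-polygon argument in the style of Komatsu--Young adapted to the intermediate columns $ap^m-k$).
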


From Theorem \ref{thm1}, we can see that for all primes
$p\ge 5$ part (ii) of Conjecture \ref{cnj1} is true when $n=1$ and part (i)
of Conjecture \ref{cnj1} also holds for $n=1$ and
$k\equiv \epsilon_k\pmod{p-1}$. By the main result in
\cite{[QFH]}, we know that Conjecture
\ref{cnj1} is true when $p=3$.

Letting $m=n$, Conjecture \ref{cnj1} becomes the following conjecture.

\begin{cnj}\label{cnj2}
Let $p$ be an odd prime. Let $a,n,k$ be positive integers such that
$1\le a\leq p-1$ and $2\le k\le a(p-1)p^{n-1}+1$.
Then
\begin{align*}
 v_p(s(ap^n, ap^n-k)) =\Big\{\begin{array}{ll}
  n+(n+v_p(k))\epsilon_k
 -1- v_p(\lfloor\frac{k}{2}\rfloor),
 & {\it if}\ k\equiv \epsilon_k\pmod{p-1}; \\
n+(n+v_p(k))\epsilon_k
+v_p(B_{2\lfloor\frac{\langle k\rangle}{2}\rfloor}),
& {\it if}\ k\not\equiv \epsilon_k\pmod{p-1}.
\end{array}
\end{align*}
\end{cnj}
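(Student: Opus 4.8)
The plan is to prove Conjecture~\ref{cnj2} by induction on $n$, the case $n=1$ being essentially Theorem~\ref{thm1}; at each level the aim is to isolate a congruence for $s(ap^n,ap^n-k)$ modulo a power of $p$ strictly larger than the predicted valuation, so that $v_p(s(ap^n,ap^n-k))$ can be read off directly. The first step is to reduce the odd case to the even one. If $k\ge 3$ is odd then $2ap^n-k$ is odd, so Lemma~\ref{lem1} applies to $s(ap^n,ap^n-k)$ and its $i=ap^n-k+1$ term equals $\tfrac12\,ap^n(ap^n-k)\,s(ap^n,ap^n-k+1)$, which is precisely the mechanism behind Theorem~\ref{thm2}. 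One checks that $v_p(ap^n-k)=v_p(k)$ throughout the range $2\le k\le a(p-1)p^{n-1}+1$, so Theorem~\ref{thm2} gives $v_p(s(ap^n,ap^n-k))=v_p(s(ap^n,ap^n-k+1))+v_p(k)+n$ as soon as $v_p(s(ap^n,ap^n-k+1))\le 2n-1$; feeding in the (even) formula for $k-1$ then yields the claimed value whenever $n\ge n_0(p,k)$, with no restriction at all when $p$ is regular since there $v_p(B_{\langle k-1\rangle})=0$. It thus suffices to treat even $k$.

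For even $k$ I would run, inside the passage from level $n-1$ to level $n$, a secondary induction on $a$ modelled on the proof of Theorem~\ref{thm1}. For $a\ge 2$, Lemma~\ref{lem2} gives $s(ap^n,ap^n-k)=\sum_i s(p^n,i)\,s_{p^n}((a-1)p^n,ap^n-k-i)$, and the case $a=1$ is handled instead through the split $p^n=p^{n-1}+(p-1)p^{n-1}$, which pushes the factor $s_{p^{n-1}}((p-1)p^{n-1},\cdot)$ back to level $n-1$ by Lemma~\ref{lem3}; by Lemma~\ref{lem7} the generalized Stirling factors agree with the ordinary ones modulo $p^{2n}$. The task is then to see which summands survive: the crude lower bounds for $v_p(s(p^n,\cdot))$ (Legendre's formula applied to $s(n,i)=(n-1)!\,H(n-1,i-1)$, together with Lengyel's linear bound) are large unless the lower index is close to the top, and the shift terms coming from Lemma~\ref{lem3} each carry extra powers of $p$, so a single leading term should dominate. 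I expect it to be a $p$-adic unit times a power of $p$ times $s(p,1)^{\,l-1}$ when $k\equiv 0\pmod{p-1}$ --- giving $v_p=n-1-v_p(k/2)$ via Corollary~\ref{cor2.10}, the higher-$n$ analogue of claim~(4.8) in Section~4 --- and a unit times a power of $p$ times $s(p,1)^{\,l-1}s(p,p-\langle k\rangle)$ when $k\not\equiv 0\pmod{p-1}$ --- giving $v_p=n+v_p(B_{\langle k\rangle})$ via Lemma~\ref{lem5} and Corollary~\ref{cor2.10}, the analogue of claim~(4.21). Once a single exact even value has been obtained this way, Lengyel's Theorem~\ref{thm1.4'} propagates it to all larger $n$; note, however, that since $k$ may be large compared with $ap$, the value at the first admissible level cannot simply be inherited from $n=1$ but must itself come out of the convolution argument.

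The main obstacle is twofold. Combinatorially, one must prove rigorously that the leading term of the iterated convolution dominates and compute its unit part --- the bookkeeping that generalizes claims~(4.8) and~(4.21) to arbitrary $n$, bounding every cross term by the crude Stirling estimates above and by the kind of $p$-adic inequalities already used in the proof of Theorem~\ref{thm2}; this is exactly where the hypothesis $2\le k\le a(p-1)p^{n-1}+1$ is needed. Arithmetically, the irregular case is the genuinely hard one: already at $n=1$, Theorem~\ref{thm1} only gives $v_p(s(ap,ap-k))\ge n+v_p(B_{\langle k\rangle})$ rather than equality, so one would need $v_p(s(p,p-\langle k\rangle))=1+v_p(B_{\langle k\rangle})$ exactly, which requires congruences for $H_{p-1}^{(r)}$ sharper than Washington's (Lemma~\ref{lem4}) to higher powers of $p$ together with control of $v_p(B_m)$ for irregular pairs $(p,m)$ --- ingredients that seem to demand $p$-adic $L$-function and Kummer-congruence methods rather than the elementary toolkit used in this paper. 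I would expect the regular case to go through essentially as sketched above, with the irregular case being the part that resists.
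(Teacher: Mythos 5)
This statement is Conjecture \ref{cnj2}: the paper does not prove it, and only records that it holds for $p=3$ (by \cite{[QFH]}) and, partially, for $n=1$ via Theorem \ref{thm1} (and there only with equality in case (i), or in case (ii) under the regularity-type condition $v_p\big(B_{2\lfloor\langle k\rangle/2\rfloor}\big)=0$). So there is no paper proof to compare against, and your proposal must stand on its own as a proof — which it does not. You yourself flag the two decisive gaps: the leading-term analysis of the convolution $s(ap^n,ap^n-k)=\sum_i s(p^n,i)\,s_{p^n}((a-1)p^n,ap^n-k-i)$, i.e.\ the level-$n$ analogues of claims (4.8) and (4.21), is only asserted ("I expect it to be\ldots"), and the irregular case would require showing $v_p(s(p,p-\langle k\rangle))=1+v_p(B_{\langle k\rangle})$ exactly, which is beyond Lemma \ref{lem4}/Lemma \ref{lem5} and is not available even for the base case $n=1$. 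A sketch that openly defers precisely the hard steps is a research plan, not a proof.

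Two further concrete problems with the plan as written. First, the propagation step "once a single exact even value has been obtained, Lengyel's Theorem \ref{thm1.4'} propagates it to all larger $n$" cannot be used in the branch $k\not\equiv\epsilon_k\pmod{p-1}$: Theorem \ref{thm1.4'} requires some $n_1>3\log_p k+\log_p a$ with $v_p(s(ap^{n_1},ap^{n_1}-k))<n_1$, whereas the conjectured value in that branch is $n_1+v_p\big(B_{2\lfloor\langle k\rangle/2\rfloor}\big)\ge n_1$, so the hypothesis is exactly what the conjecture predicts never holds; you would have to redo the full convolution argument at every level there. Second, the reduction of odd $k$ to even $k-1$ via Theorem \ref{thm2} needs $v_p(s(ap^n,ap^n-k+1))\le 2n-1$; this is fine for regular $p$, but for irregular $p$ the even value is $n+v_p(B_{2\lfloor\langle k-1\rangle/2\rfloor})$ and there is no a priori bound on $v_p(B_m)$ for irregular pairs, so even the odd reduction is conditional. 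In short: the regular-prime portion of your outline is a reasonable and natural extension of the paper's Section 4 machinery (and consistent with how the authors verify low cases), but as submitted it leaves unproved exactly the dominance estimates and Bernoulli-valuation inputs on which the conjecture turns.
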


On the other hand, Corollary 4 in \cite{[KY]} gives us that
$$
v_p(s(ap^n, ap^m))=\frac{a}{p-1}(p^n-p^m)-a(n-m)p^n.
$$
So we suggest the following conjecture as the conclusion of this paper.

\begin{cnj}\label{cnj3}
Let $p$ be a prime. Let $a,n,m$ and $k$ be positive integers such that
$1\le a\leq p-1$, $1\le m\le n$ and $2\le k\le a(p-1)p^{m-1}+1<ap^m$.
Then
\begin{align*}
v_p(s(ap^n, ap^m-k))
=v_p(s(ap^n, ap^m))+v_p(s(ap^n, ap^n-k))
+\big(2\Big\lfloor\frac{k}{2}\Big\rfloor-1\big)(n-m).
\end{align*}
\end{cnj}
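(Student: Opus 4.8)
\medskip
\noindent
\textbf{A strategy for Conjecture \ref{cnj3}.}
Note first that Conjecture \ref{cnj3} is a compatibility statement: combining it with the Komatsu--Young evaluation
\begin{align*}
v_p\big(s(ap^n,ap^m)\big)=\frac{a}{p-1}(p^n-p^m)-ap^m(n-m)
\end{align*}
(obtained from Legendre's formula once one observes that $ap^\ell-1$ has base-$p$ digits $a-1,p-1,\dots,p-1$, so $v_p((ap^\ell-1)!)=\frac{a(p^\ell-1)}{p-1}-\ell$) and with the value of $v_p(s(ap^n,ap^n-k))$ predicted by Conjecture \ref{cnj2}, the conjecture reduces to the identity displayed in Conjecture \ref{cnj1}(i) by the elementary relation $k-\epsilon_k=2\lfloor k/2\rfloor$. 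Since Conjecture \ref{cnj2} is the specialisation $m=n$ of Conjecture \ref{cnj1}, the plan is simply to prove the master Conjecture \ref{cnj1}, whence Conjectures \ref{cnj2} and \ref{cnj3} follow; for $p\in\{2,3\}$ this is already done in \cite{[QH]} and \cite{[QFH]}, so one may assume $p\ge5$. I would prove Conjecture \ref{cnj1} by induction on $n$, with Theorem \ref{thm1} as the base case $n=1$. For fixed $m$ and $k$ the heart of the inductive step is the \emph{off-diagonal stabilisation} identity
\begin{align*}
v_p\big(s(ap^n,ap^m-k)\big)=v_p\big(s(ap^{n-1},ap^m-k)\big)+a(p^{n-1}-p^m)+k\qquad(n>m),
\end{align*}
whose increment is forced to be $a(p^{n-1}-p^m)+k$: it is $v_p(s(ap^n,ap^m))-v_p(s(ap^{n-1},ap^m))=a(p^{n-1}-p^m)$ plus $(1+\epsilon_k)+(2\lfloor k/2\rfloor-1-\epsilon_k)=k$, where $1+\epsilon_k$ is the pure-diagonal jump of Theorems \ref{thm1.4'} and \ref{thm1.5}.

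To prove this identity I would imitate Section 4: peel off the first block of $ap^{n-1}$ elements by Lemma \ref{lem2},
\begin{align*}
s(ap^n,ap^m-k)=\sum_{i}s(ap^{n-1},i)\,s_{ap^{n-1}}\!\big(a(p-1)p^{n-1},\,ap^m-k-i\big),
\end{align*}
expand each $s_{ap^{n-1}}(\cdot,\cdot)$ by Lemma \ref{lem3}, and note that the shift parameter $ap^{n-1}$ carries the factor $p^{n-1}$, so that modulo a sufficiently high power of $p$ only boundedly many terms survive (one of which is itself a Stirling number at the composite level $a(p-1)p^{n-1}$ and must be unwound in the same way). The surviving Stirling inputs are then supplied as in Section 4: the near-diagonal ones $s(ap^{n-1},ap^{n-1}-k)$ from the inductive hypothesis together with Theorem \ref{thm2} (for odd $k$) and Theorems \ref{thm1.4'}, \ref{thm1.5} (for the pure diagonal as the level changes), and the far-off-diagonal ones $s(ap^{n-1},j)$ from Theorem \ref{thm1}(iii) and Lemma \ref{lem7}, exactly as $s_p((a-1)p,\cdot)$ was controlled when $n=1$. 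The leading term will again have the shape $($a binomial coefficient$)\times($a Stirling number of the first kind$)\times p^{(\,\cdots\,)}$, and its exact valuation is governed by the single Bernoulli number $B_{2\lfloor\langle k\rangle/2\rfloor}$ via Lemmas \ref{lem5'} and \ref{lem5}; these congruences must be carried through the induction.

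The main obstacle is the one that already makes Section 4 long, magnified by the extra powers of $p$ for $n\ge2$. Two points look genuinely hard. First, the error term of the convolution contains factors $(ap^{n-1})^{j}\binom{\,\cdot\,}{j}$ with $j$ above some threshold, and one needs a $p$-adic lower bound for these of the type established in (\ref{3.11}), via a case analysis on $v_p(ap^m-k)$ against the ambient powers of $p$ in the spirit of Subcases 2.1--2.2 of the proof of Theorem \ref{thm2}; this bookkeeping is delicate because, as the value of $v_p(s(ap^n,ap^m))$ already indicates, the dominant contribution need not come from the term one writes down first. Second --- and here I expect the real difficulty to lie --- Theorems \ref{thm1.4'} and \ref{thm1.5} are only conditional on a ``$v_p<n_1$'' hypothesis for the relevant $k$, so an unconditional proof must verify that hypothesis uniformly over $2\le k\le a(p-1)p^{m-1}+1$, that is, rule out anomalous $p$-adic collapse of the pure diagonal $s(ap^n,ap^n-k)$. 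Establishing this regularity will probably require the $p$-adic Newton polygon and higher-order Bernoulli number input of Komatsu--Young and Adelberg rather than the elementary convolution identities alone; indeed, a fully Newton-polygon-based analysis of $(x+1)(x+2)\cdots(x+ap^n-1)$ might prove Conjecture \ref{cnj1} outright and sidestep the conditional theorems altogether.
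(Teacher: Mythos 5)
The statement you are addressing is Conjecture \ref{cnj3}, which the paper does not prove: it is offered as an open problem, supported only by consistency evidence (the case $n=1$ via Theorem \ref{thm1}, the case $p=3$ via \cite{[QFH]}, and the Komatsu--Young evaluation of $v_p(s(ap^n,ap^m))$). Measured against that, your proposal is not a proof but a research plan, and it says so itself. Its one complete and correct ingredient is the reduction: using $v_p\big(s(ap^n,ap^m)\big)=\frac{a}{p-1}(p^n-p^m)-ap^m(n-m)$ (your form of the Komatsu--Young value is the right one; the display in Section 5 of the paper, with $p^n$ in place of $p^m$, appears to be a typo) and the identity $2\lfloor k/2\rfloor=k-\epsilon_k$, Conjecture \ref{cnj3} is indeed equivalent, on the stated range of $k$, to Conjecture \ref{cnj1}(i), of which Conjecture \ref{cnj2} is the case $m=n$. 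That observation is worth recording, but it only relocates the difficulty into Conjecture \ref{cnj1}, which is exactly what the paper leaves open.

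Beyond the reduction there are genuine gaps. Your ``off-diagonal stabilisation'' identity $v_p(s(ap^n,ap^m-k))=v_p(s(ap^{n-1},ap^m-k))+a(p^{n-1}-p^m)+k$ is nothing other than the inductive step of Conjecture \ref{cnj1}(i) restated; its increment is computed by subtracting the conjectured formulas, so invoking it is circular until it is proved independently, and the proposal gives no proof --- only a template (Lemma \ref{lem2} plus Lemma \ref{lem3} plus estimates in the spirit of (\ref{3.11})) without carrying out the estimates. Moreover the quantitative tools of the paper are too coarse for $n\ge 2$: the congruences of Lemmas \ref{lem5'}, \ref{lem5} and the claims (\ref{4.8}), (\ref{4.21}) are only modulo $p^2$ or $p^3$, and Lemma \ref{lem7} only gives precision $p^{2n}$, whereas the valuations you must resolve grow like $a(p^{n-1}-p^m)+k$, so the whole Section 4 machinery would have to be rebuilt at much higher $p$-adic precision, with the exact Bernoulli-number term $T_k$ tracked through every convolution. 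Finally, as you acknowledge, Theorems \ref{thm1.4'} and \ref{thm1.5} are conditional on a hypothesis of the form $v_p(s(ap^{n_1},ap^{n_1}-(k-1)))<n_1$ which no result in the paper verifies uniformly in $k$; ruling out that anomalous collapse is an open problem in its own right. So the proposal identifies a reasonable line of attack and a correct equivalence among the three conjectures, but it does not prove the statement, which remains open in the paper as well.
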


\begin{center}
{\bf Acknowledgement}
\end{center}
The authors would like to thank the anonymous referee
for careful reading of the manuscript and helpful
comments and suggestions.

\bibliographystyle{amsplain}

\end{document}